\documentclass[12pt, a4paper]{article}
\usepackage[margin=1in]{geometry}
\usepackage{amsmath,amssymb,amsthm,bm,mathtools,mathrsfs}
\usepackage{graphicx}
\usepackage{tikz}
\usepackage{hyperref}
\usepackage[authoryear,round]{natbib}
\usepackage{subcaption}
\usepackage{enumitem}
\usepackage{setspace}
\usepackage{xcolor}
\usetikzlibrary{arrows.meta,calc,positioning,decorations.pathreplacing,decorations.markings}
\hypersetup{
    colorlinks=true,
    citecolor=black,
    linkcolor=black,
    urlcolor=black,
    pdfpagemode=UseNone
}

\newtheorem{theorem}{Theorem}[section]
\newtheorem{lemma}[theorem]{Lemma}
\newtheorem{proposition}[theorem]{Proposition}
\newtheorem{corollary}[theorem]{Corollary}
\newtheorem{remark}[theorem]{Remark}

\newtheorem{assumption}{Assumption}
\newtheorem{example}[theorem]{Example}

\newcommand{\R}{\mathbb{R}}
\newcommand{\C}{\mathbb{C}}
\newcommand{\1}{\bm{1}}
\newcommand{\I}{\bm{I}}
\newcommand{\e}{\bm{e}}

\newcommand{\diag}{\operatorname{diag}}
\newcommand{\st}{\, : \,}
\newcommand{\set}[1]{\left\{#1\right\}}
\newcommand{\vect}[1]{\bm{#1}}
\newcommand{\dd}{\mathrm{d}}

\newcommand{\re}{\operatorname{Re}}

\newcommand{\Rop}[2]{\mathcal{R}^{(#1,#2)}}
\newcommand{\Lop}[2]{\mathcal{L}^{(#1,#2)}}
\newcommand{\Prob}{\mathbb{P}}
\newcommand{\E}{\mathbb{E}}

\allowdisplaybreaks

\begin{document}

\title{Colored Markov-Modulated Brownian Motion for Preemptive Workload Stacks}
\author{Maria Laura Battagliola \and Eduardo Medina-Vald\'es \and Oscar Peralta}
\date{}

\maketitle

\begin{abstract}
We develop a colored Markov-modulated Brownian motion model for diffusion-scale systems in which newly created work preempts the active layer, occupies the top of an ordered stack, and must deplete before suspended work resumes. Such preemption arises whenever new work interrupts whatever is currently active, as in last-come, first-served queues and interrupt-driven scheduling, with diffusive dynamics appropriate when the processing rate itself fluctuates randomly. Colors encode stack position and priority, while a background phase, evolving through first-kind transitions that leave the active color unchanged, controls drift and volatility within each color. Second-kind transitions create higher colors at strictly positive random launch heights. A backward recursion summarizes higher-color excursions through return matrices and occupation-density kernels. We obtain stationary product-form representations under hold-and-jump and regulated-base boundary conventions, given explicitly through matrix-analytic formulae, and give matrix-transform formulas for illustrative launch-height families.
\end{abstract}

\section{Introduction}
\label{sec:intro}

Preemptive workload stacks arise when newly arriving or generated work interrupts the currently active item, is placed on top of it, and must be cleared before the suspended work below can resume. Such systems retain memory, since the active item is only the top element of an ordered stack, and each lower item stays suspended until every item above it has been completed. Figure~\ref{fig:intro-stack} illustrates this reduced structure schematically.

In queueing theory, this is the defining stack structure behind last-come, first-served (LCFS) preemptive-resume service, in which a new customer preempts the customer in service, becomes the active job, and the interrupted service later resumes from the point of preemption \citep{HeAlfa1998}. Closely related preemptive-resume priority queues capture the same interruption-and-resumption pattern when arrivals with higher priority displace work with lower priority \citep{Jaiswal1961,Welch1964}; for a classical book treatment of preemptive disciplines, see \citet[Ch.~IV]{Jaiswal1968}. The same reduced structure appears across many non-queueing settings as well. A task in recursive computation may generate a subtask that must be processed before the parent task continues. Solvers in branch-and-bound and tree-search methods repeatedly create, suspend, and resume subproblems \citep{LawlerWood1966}, and work-stealing schedulers redistribute such dynamic computations across workers \citep{BlumofeLeiserson1999}. Likewise, interrupt-driven processors suspend an executing task to service a higher-priority interrupt, nesting arrivals into a stack of suspended execution contexts \citep[Ch.~7]{Stallings2019}. Interrupt handling has also been analyzed as a stack of suspended executions, with corresponding stack-size bounds for interrupt-driven programs \citep{KleimanEykholt1995,ChatterjeeEtAl2004}.

\begin{figure}[htb!]
\centering
\begin{tikzpicture}[>=Latex, thick]

  \node[anchor=east, font=\small] at (-0.15,1.765) {top};
  \node[anchor=east, font=\small] at (-0.15,1.045) {middle};
  \node[anchor=east, font=\small] at (-0.15,0.325) {base};

  \draw[rounded corners, fill=gray!10] (0,0) rectangle (3.0,0.65);
  \draw[rounded corners, fill=gray!22] (0,0.72) rectangle (3.0,1.37);
  \draw[rounded corners, fill=gray!34, draw=blue!55!black, line width=1.1pt] (0,1.44) rectangle (3.0,2.09);
  \node[font=\small] at (1.5,0.325) {suspended};
  \node[font=\small] at (1.5,1.045) {suspended};
  \node[font=\small] at (1.5,1.765) {active};
  \node[font=\small] at (1.5,-0.35) {(a) Before};

  \draw[->] (3.0,1.765) -- (3.7,1.765) -- (3.7,1.045) -- (4.4,1.045);
  \node[font=\small] at (3.35,2.5) {completion};

  \draw[rounded corners, fill=gray!10] (4.4,0) rectangle (7.4,0.65);
  \draw[rounded corners, fill=gray!22, draw=blue!55!black, line width=1.1pt] (4.4,0.72) rectangle (7.4,1.37);
  \node[font=\small] at (5.9,0.325) {suspended};
  \node[font=\small] at (5.9,1.045) {active};
  \node[font=\small] at (5.9,-0.35) {(b) After completion};

  \draw[->, dashed] (7.4,1.045) -- (8.1,1.045) -- (8.1,1.765) -- (8.8,1.765);
  \node[font=\small, align=center] at (8.1,2.5) {new preempting\\work};

  \draw[rounded corners, fill=gray!10] (8.8,0) rectangle (11.8,0.65);
  \draw[rounded corners, fill=gray!22] (8.8,0.72) rectangle (11.8,1.37);
  \draw[rounded corners, fill=gray!34, draw=blue!55!black, line width=1.1pt] (8.8,1.44) rectangle (11.8,2.09);
  \node[font=\small] at (10.3,0.325) {suspended};
  \node[font=\small] at (10.3,1.045) {suspended};
  \node[font=\small] at (10.3,1.765) {active};
  \node[font=\small] at (10.3,-0.35) {(c) After new arrival};

\end{tikzpicture}
\caption{Preemptive workload stack: completion exposes the layer below, whereas new preempting work creates a positive-height active layer on top.}
\label{fig:intro-stack}
\end{figure}

To describe this behavior, \citet{VanHoudt2026} introduced coloring in the setting of Markov-modulated fluid queues under a LCFS preemptive discipline. In particular, a color identifies both the ordered position of a layer in the stack and the priority of the work occupying that position, attributes that are coinciding in this model. The stack holds at most one layer of each color at a time, colors are ordered with increasing indices from base to top, and each layer evolves through first-kind transitions that leave its color unchanged, while a second-kind transition always creates a new layer with a color strictly larger than every color currently present. This framework guarantees that the stack has finite depth and is strictly ordered. Lower coordinates are frozen while a higher color is active, so suspended work is simply stored rather than tracked as a growing history. Then, an excursion into a higher color is analyzed once, and summarized by its return effect on the color below it. Related preemptive and stack structures appear in LCFS preemptive queues with Markovian arrivals and phase-type service \citep{HeAlfa1998} and in stack-and-queue representations of priority queues and tree-like processes \citep{VanHoudtBlondia2006}.

The present paper gives a Brownian, diffusion-scale counterpart of this colored fluid construction. Specifically, Markov-modulated Brownian motion (MMBM) supplies the active-layer dynamics, while coloring supplies the stack memory and the backward recursion, placing the model in the matrix-analytic tradition for MMBMs and fluid queues \citep{Rogers1994,Asmussen1995,AhnRamaswami2003,LatoucheNguyen2015,NguyenPeralta2022,latouche2018analysis}. The proposed model is particularly useful for systems in which the active workload evolves under fluctuating operating conditions, heterogeneous job sizes, and random processing times. In such settings, a Brownian description captures variability around the average net workload rate that a fluid model with fixed deterministic rate cannot represent.

Since in reality preempting work typically arrives as an initial parcel of work, we attach to every transition that creates a new layer a positive random launch height, representing the amount of work carried by the arriving or generated preempting item. This choice also serves a mathematical purpose, as it keeps the newly created layer away from the depletion boundary from the outset, which matters for a Brownian layer. Having started exactly at zero, such a layer would immediately oscillate on both sides of the boundary, leaving it ambiguous which layer is active immediately after a color change. This is unlike the original colored fluid model, where a new layer can be initialized at zero and simply moves away from the boundary at its deterministic rate. With a strictly positive launch height instead, the new layer's later removal is a genuine first-passage event.

Moreover, at the lower boundary we distinguish two operating conventions that reflect different system behaviors. To describe systems that become idle once all admitted work has been completed, we propose the hold-and-jump model, where depletion of the last active layer sends the system to an empty state. The system remains there until a new layer is launched, in the spirit of diffusions with holding and jumping boundary behavior \citep{PengLi2013}. By contrast, the regulated-base model is appropriate for systems with persistent background work, or a lowest-priority workload that remains present at zero. In particular, a base workload remains present and is kept non-negative by a Skorokhod regulator \citep{Skorokhod1961}.

Beyond the description of the proposed model, our main contribution is its stationary analysis under both lower-boundary conventions. Crucially, coloring is not merely a modeling device for distinguishing heterogeneous workload layers. It also provides the computational structure that reduces the joint stationary problem to a finite backward recursion over colors. Complete excursions into higher colors can be summarized through their return effect on the interrupted lower color, leading to explicit product-form stationary densities and a recursive procedure for their evaluation. The distributions of the positive random launch heights enter this recursion through return matrices, allowing a broad range of launch mechanisms to be incorporated within the same framework. The analysis is common to both boundary conventions for higher colors, while the bottom-of-stack calculation is derived separately for the hold-and-jump and regulated-base models. Altogether, this amounts to a fully matrix-analytic treatment of the stationary distribution, with the product-form densities, the return and launch-density kernels, and their normalizing constants all given by explicit matrix formulae rather than by implicit characterizations. Finally, the required return and transfer matrices are obtained from explicit matrix evaluations carried out along a finite backward procedure.

The paper is organized as follows. Section~\ref{sec:background} reviews MMBMs with boundary behavior and the colored fluid framework of \citet{VanHoudt2026}. Section~\ref{sec:model} constructs the colored MMBM workload stack with random launch heights and distinguishes the hold-and-jump and regulated-base variants. Section~\ref{sec:layer_analysis} develops the backward recursion, the occupation-density kernels, and the product-form stationary representations for both variants. Section~\ref{sec:jumps} records the finite computational recursion and evaluates the launch-height transforms for deterministic, phase-type, Weibull, and gamma examples. Section~\ref{sec:conclusion} concludes.

\section{Background}
\label{sec:background}

This background section fixes the preemptive-workload interpretation that motivates the colored stack, recalls the two boundary behaviors of Markov-modulated Brownian motion (MMBM) when the base of the stack reaches zero, and reviews the colored fluid construction of \citet{VanHoudt2026}, which supplies the stack structure and the backward color recursion idea used later.

\subsection{Markov-Modulated Brownian Motion at a Boundary}
\label{sec:background-mmbm}

A Markov-modulated Brownian motion (MMBM) is a diffusive process whose drift and volatility switch according to an underlying continuous-time Markov chain, called the phase process. Formally, let $E$ be a finite phase space, with $|E|=p$, and let $\vect{T}\in\R^{p\times p}$ be the intensity matrix of this continuous-time Markov chain $J=(J(t))_{t\ge0}$ on $E$. Throughout, an intensity matrix is understood to be conservative, with nonnegative off-diagonal entries and zero row sums; we reserve the term subintensity matrix for a matrix with nonnegative off-diagonal entries and nonpositive row sums that need not sum to zero. The diagonal matrices $\vect{\Delta}_{\mu}=\diag(\mu_i\st i\in E)$ and $\vect{\Delta}_{\sigma}=\diag(\sigma_i\st i\in E)$ collect the drift parameters $\mu_i\in\R$ and volatility coefficients $\sigma_i>0$ associated with each phase $i\in E$. This parametrization governs the MMBM under either boundary convention, though the two conventions model the process $(X(t),J(t))$ on $\R_+\times E$ through different stochastic differential equations.

In particular, let $B(t)$ denote a standard Brownian motion. Under the hold-and-jump mechanism, the coordinate solves
\begin{align*}
\dd X(t)=\mu_{J(t)}\,\dd t+\sigma_{J(t)}\,\dd B(t),
\qquad X(0)=x>0,
\end{align*}
up to
\begin{equation}
\label{eq:time_first_passage}
    \tau_0=\inf\{t\ge0\st X(t)=0\},
\end{equation}
its first passage time to $0$. At $\tau_0$, the process enters an absorbing boundary state and remains motionless there until an exogenous relaunch occurs at some later time $\tau_1>\tau_0$. Then, at $\tau_1$, $X$ is reset to a new strictly positive value and the same free equation resumes. This is the form of boundary behavior studied for diffusions with holding and jumping boundaries by \citet{PengLi2013}.

Under the regulated mechanism, the coordinate is not removed at $0$, but rather a regulator pushes it upward when needed to keep it non-negative. A regulated, or reflected, MMBM is the process satisfying
\begin{align*}
\dd X(t)=\mu_{J(t)}\,\dd t+\sigma_{J(t)}\,\dd B(t)+\dd L(t),
\qquad X(0)=x\ge0,
\end{align*}
where $L(t)$ is the minimal Skorokhod regulator. This means that $L$ is the smallest continuous non-decreasing process that keeps $X$ non-negative, and it can increase only when the process is at the boundary. Equivalently, for every $t\ge0$,
\begin{equation}
\label{eq:regulator_properties}
   X(t)\ge 0,\qquad
\int_0^t \1_{\{X(s)>0\}}\,\dd L(s)=0.
\end{equation}
Regulated MMBMs were studied, under the reflected-MMBM terminology, in \citet{Rogers1994,Asmussen1995}.
The remainder of this subsection focuses on the regulated mechanism, which is the standard convention in the MMBM literature, and whose stationary theory is classical and self-contained. The hold-and-jump mechanism, by contrast, requires additional elements to describe, such as the law of the relaunch jumps, so its stationary analysis is deferred to Section~\ref{sec:layer_analysis}, where it is developed directly for the colored workload stack.

A stationary probability row vector of $\vect{T}$ is a non-negative row vector $\vect{\xi}^\top \in \R^p_+$ satisfying $\vect{\xi}\vect{T}=\vect{0}$ and $\vect{\xi}\e=1$, where $\e$ is the all-ones $p$-dimensional column vector, and it is unique when the phase chain is irreducible. For a fixed stationary phase vector $\vect{\xi}$, the usual negative-drift condition for positive recurrence of the regulated process is
\begin{equation}
\label{eq:mmbm-stability}
\vect{\xi}\vect{\Delta}_{\mu}\e<0.
\end{equation}
Under \eqref{eq:mmbm-stability}, the stationary density row vector $\vect{\pi}(x)$, defined by
\begin{align*}
(\vect{\pi}(x))_i\,\dd x=\lim_{t\to\infty}\Prob(X(t)\in \dd x,\ J(t)=i),
\qquad i\in E,\ x>0,
\end{align*}
solves
\begin{align*}
\tfrac12 \vect{\pi}''(x)\vect{\Delta}_{\sigma}^{2}
-\vect{\pi}'(x)\vect{\Delta}_{\mu}
+\vect{\pi}(x)\vect{T}
=\vect{0},
\qquad x>0.
\end{align*}
Seeking decaying solutions of the form $\vect{\pi}(x)=\vect{g}e^{\vect{K}x}$, with $\vect{g}^\top\in \R^p$ and $\vect{K}\in \R^{p \times p}$, leads to the quadratic matrix equation
\begin{equation}
\label{eq:mmbm-U}
\tfrac12 \vect{K}^{2}\vect{\Delta}_{\sigma}^{2}
-\vect{K}\vect{\Delta}_{\mu}
+\vect{T}
=\vect{0}.
\end{equation}
The relevant decaying solution is the one whose eigenvalues lie in the open left half-plane (see \citet{Asmussen1995,AhnRamaswami2017,AhnMeini2020}). Accordingly, the stationary density has the matrix-exponential form
\begin{equation}
\label{eq:mmbm-density}
\vect{\pi}(x)=\vect{g}\,e^{\vect{K}x},
\qquad x>0.
\end{equation}
The missing conditions for $\vect{g}$ come from the reflecting boundary. Imposing the domain condition satisfied by generators of regulated diffusions yields, following \citet{Asmussen1995,AhnMeini2020},
\begin{equation}
\label{eq:mmbm-boundary-g}
\vect{g}\left(\tfrac12\vect{K}\vect{\Delta}_{\sigma}^{2}-\vect{\Delta}_{\mu}\right)=\vect{0}.
\end{equation}
Together with the normalization $\left (\int_0^\infty\vect\pi(x)\,\dd x\right)\,\e=1$, \eqref{eq:mmbm-boundary-g} determines $\vect{g}$.
\subsection{Colored Markov-Modulated Fluid Queues}

\label{sec:background-colored}
\citet{VanHoudt2026} introduced a framework of \emph{colored}
Markov-modulated fluid queues in which fluid is organized into a stack of
layers under a last-come-first-served preemptive discipline. In this subsection, we review this framework.
Let $\mathcal{C} = \{1,\dots,C \} \subset \mathbb{N} $ denote the finite set of color indices in increasing order from base to top. At any time, only one
color is active, namely the highest color currently carrying a positive
fluid level. Specifically, only the active color's level evolves, and the levels held by any lower
colors stay frozen at whatever value they had when a higher color last
became active. Moreover, colors above the active one are empty. Within layer $c \in \mathcal{C}$,
the phase splits into the states $S_-$, at which the coordinate sits at its
lower boundary, and the states $S_+^{(c)}$, at which it is strictly positive.
Within color $c \in \mathcal{C}$, the background transitions are governed by
\begin{align*}
\vect{T}_{\mathrm{VH}}^{(c)}=
\begin{bmatrix}
\vect{T}_{++,\mathrm{VH}}^{(c)} & \vect{T}_{+-,\mathrm{VH}}^{(c)}\\
\vect{T}_{-+,\mathrm{VH}}^{(c)} & \vect{T}_{--,\mathrm{VH}}^{(c)}
\end{bmatrix},
\end{align*}
whereas second-kind transitions with rates
$\vect{T}_{++,\mathrm{VH}}^{(c,\ell)}$ and $\vect{T}_{-+,\mathrm{VH}}^{(c,\ell)}$ create a new layer
$\ell>c$ initialized at zero. Then, working backward from $c=C$ down to $1$, the effect of every excursion into a higher color is folded back into the lower-layer intensity matrices: the rate of leaving the active phase, whether toward the boundary or toward another phase directly, is corrected by adding the rate of triggering a higher-color excursion, weighted by the phase in which control eventually returns once that excursion depletes. Each color's correction therefore requires every color above it to have already been resolved. The stationary distribution then has a
layerwise product-of-matrix-exponentials representation. We refer to
\citet{VanHoudt2026} for the full derivation.
For preemptive workload stacks, this backward recursion is the essential reason for using colors. This avoids keeping track of the full history of suspended jobs, and it only needs the active color, the lower-layer heights, and the recursively computed effect of all possible excursions into higher colors. Our colored MMBM extension preserves this key advantage.

\section{Colored MMBM with Random Launch Heights}
\label{sec:model}

We present the colored MMBM workload stack. This is a diffusive version of the colored fluid model of Section~\ref{sec:background-colored}, as it replaces the deterministic fluid rate of each active color with a MMBM, and launches each newly created color at a random positive height rather than at zero. Throughout, we use a consistent notation with that introduced in Section~\ref{sec:background}. Recall that $\mathcal C=\{1,\ldots,C\}$ denotes the set of workload colors, and let $\mathcal C_0=\{0\}\cup\mathcal C$ denote the enlarged color set used in the hold-and-jump model, where $0$ labels the empty holding state.

We first describe the model heuristically, before giving its precise construction below. To fully describe the process, we need the color $c$, the index recording the position and priority of the active color in the stack, and the descriptors of the MMBM, namely the phase $i\in E$, the state of the background Markov chain governing the drift $\mu_i^{(c)}$, and the volatility $\sigma_i^{(c)}$. We use the common finite phase space $E$ for every color, so the phase process takes values in $E$ throughout regardless of which color is active, and every color's intensity matrix is defined on $E$.
Then, a first-kind transition changes only the phase, leaving the color fixed, while a second-kind transition changes the color by moving to a strictly larger color index, and may simultaneously change the phase as well, moving it to the target phase of the triggering transition. Thus, while a first-kind transition is a phase change alone, a second-kind transition is potentially a joint color-and-phase change. The rates governing both kinds of transitions are made precise below via the subintensity matrices $\vect T^{(c)}$ and the rate matrices $\vect T^{(c,c')}$.

Write $\bm X(t)=(X_1(t),\ldots,X_C(t))^\top$ for the resulting multivariate process, whose $c$-th coordinate is the workload held in color $c$, with the state space and coordinate dynamics made precise below in \eqref{eq:theta-face}–\eqref{eq:ambient-state-spaces} and \eqref{eq:sde-layer-colored}–\eqref{eq:sde-layer-regulated}. There are two equivalent conventions for graphing a colored MMBM. The convention that aligns closely with the queueing literature is to plot the total workload:
\begin{align*}
W(t)=\sum_{k=1}^C X_k(t).
\end{align*}
Our analysis is mainly formulated in the workload-by-color coordinates, where we exploit the multivariate nature of $\bm X(t)$, though we return to the total-workload description at several points below, where aggregate workload admits a simpler interpretation than the individual coordinates. Let $A(t)\in\mathcal C$ denote the active color in the regulated-base model, and let $A(t)\in\mathcal C_0$ in the hold-and-jump model, with $A(t)=0$ while the process is in the empty holding state. When $A(t)=c\geq1$, the workload-by-color graph displays the active coordinate $X_c(t)$ in color $c$. When $A(t)=0$, the stack itself is empty, so the graph correspondingly shows no active workload. On every maximal interval on which $A(t)=c\geq1$, all lower-color coordinates are frozen, and therefore
\begin{align*}
W(t)=\sum_{k<c}X_k(t)+X_c(t),\qquad A(t)=c,
\end{align*}
where $\sum_{k<c}X_k(t)$ is a constant vertical shift on that interval.
Thus, while color $c$ is active, the two graphical representations differ only by the constant shift, and the workload-by-color segment is obtained from the total-workload segment by subtracting the shift. Conversely, the total-workload segment is recovered by adding it back.

The distinction between them becomes important precisely at color changes. Suppose that a second-kind transition launches a higher color $c'>c$ with workload $Y>0$. In the total-workload representation, the new workload is added to the existing stack, so the graph jumps from $W(t-)$ to $W(t-)+Y$. In the workload-by-color representation, each color is measured relative to its own zero level, so the newly active coordinate $X_{c'}(t-)$ jumps from $0$ to $Y$.
When color $c'$ depletes and its path reaches $0$, the most recent nonempty lower color resumes from its frozen height. The total-workload path is continuous at this instant, whereas the workload-by-color graph switches from displaying the path colored in $c'$ to displaying the stored value of the resumed lower coordinate. Any vertical connector used in such a plot therefore represents a change of displayed coordinate rather than a physical jump in total workload.
Note that the scalar path $W(t)$ alone does not determine the full workload vector $\bm X(t)$, since it does not record how workload is distributed among colors; recovering $\bm X(t)$ from $W(t)$ requires appending the entire history of the active color $A(s)$, $s\le t$, which records every color change and hence how each launched workload has been layered into the total.

Moreover, two lower-boundary conventions distinguish what happens once the whole stack reaches its lowest position.
Specifically, after hitting 0, the process then moves to an empty state and waits there until a new color is launched in the hold-and-jump model. In the regulated-base model, the bottom color is never removed. This is encoded in the model by a Skorokhod regulator, which keeps the coordinate non-negative while it continues to evolve as a regulated MMBM. Notice that depletion of any higher color triggers no jump, and control simply returns to the most recent nonempty lower color, resuming continuously from where it was frozen.
Figures~\ref{fig:sample-path-holdjump} and~\ref{fig:sample-path-reg} illustrate the two lower-boundary conventions and, within each figure, the two equivalent graphing conventions.

\begin{figure}[htb!]
\centering
{\bf Hold-and-jump sample paths\par}
\vspace{0.5em}
\begin{minipage}[t]{0.49\textwidth}
\centering
\begin{tikzpicture}[x=0.55cm,y=0.95cm,>=Latex, thick,font=\scriptsize]
  \draw[->] (0,0) -- (8.4,0) node[right] {$t$};
  \draw[->] (0,0) -- (0,3.4) node[above] {$W(t)$};
  \draw[gray!60, densely dotted] (0,0) -- (8.2,0);
  \node[below left] at (0,0) {$0$};

  \draw[very thick,blue!70!black]
    plot[smooth] coordinates {(0.2,0.55) (0.8,0.95) (1.2,1.15) (1.7,1.02) (2.0,1.18)};

  \draw[very thick,red!75!black, dashed, ->] (2.0,1.18) -- (2.0,2.05);
  \node[above left,red!75!black] at (2.0,2.05) {};

  \draw[very thick,red!75!black]
    plot[smooth] coordinates {(2.0,2.05) (2.5,2.45) (3.0,2.20) (3.6,1.75) (4.2,1.40) (4.6,1.18)};

  \draw[very thick,blue!70!black]
    plot[smooth] coordinates {(4.6,1.18) (5.1,0.95) (5.5,0.62) (5.9,0.28) (6.15,0.0)};
  \draw[very thick,blue!70!black] (6.15,0.0) -- (6.85,0.0);
  \node[below,align=center] at (6.5,0.0) {hold at $0$};

  \draw[very thick,blue!70!black, dashed, ->] (7.0,0.0) -- (7.0,0.45);
  \node[above left,blue!70!black] at (7.0,0.45) {};

  \draw[very thick,blue!70!black]
    plot[smooth] coordinates {(7.0,0.45) (7.4,0.78) (7.9,1.00) (8.15,0.88)};

  \node[blue!70!black] at (1.0,1.55) {color $1$};
  \node[red!75!black] at (3.3,2.85) {color $2$};
  \filldraw[black] (4.6,1.18) circle (1.4pt);
\end{tikzpicture}
\subcaption{{Total-workload representation.}}
\label{fig:sample-path-holdjump-total}
\end{minipage}\hfill
\begin{minipage}[t]{0.49\textwidth}
\centering
\begin{tikzpicture}[x=0.55cm,y=0.95cm,>=Latex, thick,font=\scriptsize]
  \draw[->] (0,0) -- (8.4,0) node[right] {$t$};
  \draw[->] (0,0) -- (0,3.4) node[above] {$X_{A(t)}(t)$};

  \node[below left] at (0,0) {$0$};

  \draw[very thick,blue!70!black]
    plot[smooth] coordinates {(0.2,0.55) (0.8,0.95) (1.2,1.15) (1.7,1.02) (2.0,1.18)};

  \draw[gray!75, densely dotted] (2.0,1.18) -- (2.0,0.0);
  \draw[very thick,red!75!black, dashed, ->] (2.0,0.0) -- (2.0,0.87);

  \draw[very thick,red!75!black]
    plot[smooth] coordinates {(2.0,0.87) (2.5,1.27) (3.0,1.02) (3.6,0.57) (4.2,0.22) (4.6,0.0)};

  \draw[gray!75, densely dotted, ->] (4.6,0.0) -- (4.6,1.18);
  \node[blue!70!black,above right] at (4.62,1.18) {resume color $1$};
  \draw[very thick,blue!70!black]
    plot[smooth] coordinates {(4.6,1.18) (5.1,0.95) (5.5,0.62) (5.9,0.28) (6.15,0.0)};
  \draw[very thick,blue!70!black] (6.15,0.0) -- (6.85,0.0);

  \draw[very thick,blue!70!black, dashed, ->] (7.0,0.0) -- (7.0,0.45);
  \draw[very thick,blue!70!black]
    plot[smooth] coordinates {(7.0,0.45) (7.4,0.78) (7.9,1.00) (8.15,0.88)};

  \node[blue!70!black] at (1.0,1.55) {color $1$};
  \node[red!75!black] at (3.2,1.55) {color $2$};
\end{tikzpicture}
\subcaption{{Workload-by-color representation.}}
\label{fig:sample-path-holdjump-color}
\end{minipage}
\caption{{Hold-and-jump paths in total-workload (left panel) and workload-by-color (right panel) coordinates.}}
\label{fig:sample-path-holdjump}
\end{figure}

\begin{figure}[htb!]
\centering
{\bf Regulated-base sample paths\par}
\vspace{0.5em}
\begin{minipage}[t]{0.49\textwidth}
\centering
\begin{tikzpicture}[x=0.55cm,y=0.95cm,>=Latex, thick,font=\scriptsize]
  \draw[->] (0,0) -- (8.6,0) node[right] {$t$};
  \draw[->] (0,0) -- (0,3.4) node[above] {$W(t)$};
  \draw[gray!60, densely dotted] (0,0) -- (8.35,0);
  \node[below left] at (0,0) {$0$};

  \draw[very thick,blue!70!black]
    plot[smooth] coordinates {(0.2,0.75) (0.9,1.10) (1.5,0.90) (2.0,0.55) (2.45,0.15) (2.7,0.0)};
  \draw[very thick,blue!70!black]
    plot[smooth] coordinates {(2.7,0.0) (3.0,0.12) (3.25,0.28) (3.55,0.18) (3.75,0.35)};
  \draw[decorate,decoration={brace,mirror,amplitude=4pt}] (2.72,-0.08) -- (3.72,-0.08)
      node[midway,below=6pt,align=center] {regulated at $0$};

  \draw[very thick,red!75!black, dashed, ->] (3.75,0.35) -- (3.75,1.75);
  \node[above left,red!75!black] at (3.75,1.75) {launch};

  \draw[very thick,red!75!black]
    plot[smooth] coordinates {(3.75,1.75) (4.2,2.20) (4.8,1.95) (5.4,1.30) (5.9,0.72) (6.2,0.35)};

  \draw[very thick,blue!70!black]
    plot[smooth] coordinates {(6.2,0.35) (6.55,0.18) (6.85,0.04) (7.05,0.0) (7.3,0.16) (7.65,0.42) (8.15,0.78)};

  \filldraw[black] (6.2,0.35) circle (1.4pt);
  \node[blue!70!black] at (1.4,1.45) {color $1$};
  \node[red!75!black] at (5.0,2.55) {color $2$};
\end{tikzpicture}
\subcaption{{Total-workload representation.}}
\label{fig:sample-path-reg-total}
\end{minipage}\hfill
\begin{minipage}[t]{0.49\textwidth}
\centering
\begin{tikzpicture}[x=0.55cm,y=0.95cm,>=Latex, thick,font=\scriptsize]
  \draw[->] (0,0) -- (8.6,0) node[right] {$t$};
  \draw[->] (0,0) -- (0,3.4) node[above] {$X_{A(t)}(t)$};
  \draw[red!55, densely dotted] (0,0) -- (8.35,0);
  \node[below left] at (0,0) {$0$};

  \draw[very thick,blue!70!black]
    plot[smooth] coordinates {(0.2,0.75) (0.9,1.10) (1.5,0.90) (2.0,0.55) (2.45,0.15) (2.7,0.0)};
  \draw[very thick,blue!70!black]
    plot[smooth] coordinates {(2.7,0.0) (3.0,0.12) (3.25,0.28) (3.55,0.18) (3.75,0.35)};
  \draw[decorate,decoration={brace,mirror,amplitude=4pt}] (2.72,-0.08) -- (3.72,-0.08)
      node[midway,below=6pt,align=center] {regulated at $0$};

  \draw[gray!75, densely dotted] (3.75,0.35) -- (3.75,0.0);
  \draw[very thick,red!75!black, dashed, ->] (3.75,0.0) -- (3.75,1.40);
  \node[red,right] at (3.80,1.05) {};

  \draw[very thick,red!75!black]
    plot[smooth] coordinates {(3.75,1.40) (4.2,1.85) (4.8,1.60) (5.4,0.95) (5.9,0.37) (6.2,0.0)};

  \draw[gray!75, densely dotted, ->] (6.2,0.0) -- (6.2,0.35);
  \node[blue!70!black,above right] at (6.22,0.85) {resume color $1$};
  \draw[very thick,blue!70!black]
    plot[smooth] coordinates {(6.2,0.35) (6.55,0.18) (6.85,0.04) (7.05,0.0) (7.3,0.16) (7.65,0.42) (8.15,0.78)};

  \node[blue!70!black] at (1.4,1.45) {color $1$};
  \node[red!75!black] at (5.0,2.15) {color $2$};
\end{tikzpicture}
\subcaption{{Workload-by-color representation.}}
\label{fig:sample-path-reg-color}
\end{minipage}
\caption{{Regulated-base paths in total-workload (left panel) and workload-by-color (right panel) coordinates. }}
\label{fig:sample-path-reg}
\end{figure}

We now define the state space for the two boundary conventions. For $c\in\mathcal C$, the color-$c$ face is
\begin{equation}
\label{eq:theta-face}
\Theta_c=\set{(x_1,\dots,x_C)\in\R_+^C\st x_c>0,\ x_{c+1}=\cdots=x_C=0},
\end{equation}
consisting of the configurations in which color $c$ is the active color. The coordinates $x_1,\ldots,x_{c-1}$ record whatever work is suspended in the lower colors. The hold-and-jump model requires one further color, labeled $0$, which carries no continuous coordinate of its own, and records that the process is currently holding at the empty boundary. The regulated-base model needs no such color, since its lowest position, color $1$, is regulated rather than depleted, and so is never vacated. The state space is the disjoint union of these faces, paired with the phase space, together with this extra holding face in the hold-and-jump (H) model and a closed base face in the regulated-base (R) model, i.e.
\begin{equation}
\label{eq:ambient-state-spaces}
\mathcal S^{\mathrm H}=\big(\{\bm 0\}\times\{0\}\times E\big)\ \sqcup\ \bigsqcup_{c=1}^C\big(\Theta_c\times\{c\}\times E\big),
\qquad
\mathcal S^{\mathrm R}=\big(\overline\Theta_1\times\{1\}\times E\big)\ \sqcup\ \bigsqcup_{c=2}^C\big(\Theta_c\times\{c\}\times E\big),
\end{equation}
where $\overline\Theta_1=\set{(x_1,0,\ldots,0)\st x_1\ge0}$. Then a state is written as $(\bm x,c,i)$, where $\bm x\in\R_+^C$ is the workload vector, $i\in E$ is the current phase, and $c$ is the active-color label, with $c\in\mathcal C_0$ in the hold-and-jump model and $c\in\mathcal C$ in the regulated-base model.

When an active higher color $c\ge2$ reaches $0$, it is removed and control returns to the highest-indexed color below $c$ that still carries positive workload, if one exists. If no such lower color carries positive workload, the hold-and-jump model enters the empty holding state, whereas the regulated-base model returns to color $1$, which remains active at $0$. In both cases, no positive lower workload remains. In particular, $x_c=0$ is never an occupied state of a depleting color, since the active-color label changes immediately upon depletion. In the regulated-base model, color $1$ is reflected rather than removed at $0$, and may also be frozen there during a higher-color excursion. This is why $x_1=0$ is included in $\overline{\Theta}_1$ in the regulated-base state space.

We can now describe the behavior of the colored MMBM between successive random launch heights, through the joint state $Z(t)=(\bm X(t),A(t),J(t))$, with $\bm X(t)=(X_1(t),\dots, X_C(t))^\top\in\R_+^C$ the current workload vector, $A(t)$ the active color, and $J(t)\in E$ the current phase. Recall that, on the sector $A(t)=c$ for $c\ge2$, only the active coordinate $X_c(t)$ evolves between background jumps, while the lower coordinates remain frozen. Then, its evolution is
\begin{equation}
\label{eq:sde-layer-colored}
\dd X_c(t)=\mu_{J(t)}^{(c)}\,\dd t+\sigma_{J(t)}^{(c)}\,\dd B(t),
\qquad c=2,\dots,C.
\end{equation}
In the hold-and-jump model, color $c=1$ follows this same free dynamics and is depleted at $0$ exactly as the higher colors, after which it is removed. For the base color in the regulated-base model, we instead impose regulation
\begin{equation}
\label{eq:sde-layer-regulated}
\dd X_1(t)=\mu_{J(t)}^{(1)}\,\dd t+\sigma_{J(t)}^{(1)}\,\dd B(t)+\dd L(t),
\qquad X_1(t)\ge0,
\end{equation}
where $L(t)$ is the minimal regulator such that \eqref{eq:regulator_properties} applies.

We assume $\sigma_i^{(c)}>0$ for all $i\in E$ and all $c \in \mathcal{C}$, which keeps every active color genuinely diffusive in every phase.

For each color $c\in \mathcal{C}$, let $\vect{T}^{(c)}\in\R^{p\times p}$ be the subintensity matrix of first-kind phase transitions that leave the active color unchanged, with off-diagonal entries $T^{(c)}_{ij}\ge0$, $i\ne j$, giving first-kind transition rates. For each $1\le c<c'\le C$, let $\vect{T}^{(c,c')}\in\R^{p\times p}$ be a nonnegative matrix whose $(i,j)$-entry gives the rate at which the second-kind transition introduced above carries the active color from $c$ to $c'$ while moving the phase from $i$ to $j$. The diagonal entries of $\vect T^{(c)}$ then account for the total rate of every way phase $i$ can be left, both by a first-kind transition and by a second-kind transition to a higher color, i.e.
\begin{align*}
T^{(c)}_{ii}=-\sum_{j\ne i}T^{(c)}_{ij}-\sum_{c'>c}\sum_{j\in E}T^{(c,c')}_{ij},
\qquad i\in E.
\end{align*}

In the hold-and-jump model, let $\vect{T}^{(0)}\in\R^{p\times p}$ be the analogous subintensity matrix of first-kind phase transitions while the process holds at the empty state, with off-diagonal entries $T^{(0)}_{ij}\ge0$, $i\ne j$, giving first-kind transition rates there. We also allow launches from the empty state by matrices $\vect{T}^{(0,c)}$, $c \in \mathcal{C}$, with $\vect{T}^{(0)}$'s diagonal analogously including the exit rate to every $\vect T^{(0,c)}$. Thus, for each active color $c \in \mathcal{C}$,
\begin{equation}
\label{eq:total-intensity-layer}
\vect{Q}^{(c)}=\vect{T}^{(c)}+\sum_{c'>c}\vect{T}^{(c,c')}
\end{equation}
is the phase intensity matrix while the active color is $c$, with the convention that the sum is empty when $c=C$. In the hold-and-jump model, the empty-state phase process is governed by
\begin{equation}
\label{eq:Q0-holdjump}
\vect{Q}^{(0)}=\vect{T}^{(0)}+\sum_{c=1}^C\vect{T}^{(0,c)}.
\end{equation}
The following Assumption~\ref{assump:intensity} is what \eqref{eq:total-intensity-layer} and \eqref{eq:Q0-holdjump} require in order for $\vect{Q}^{(c)}$ and $\vect{Q}^{(0)}$ to be intensity matrices, and it is assumed throughout the remainder of the paper.
\begin{assumption}[Intensity matrices and irreducibility]
\label{assump:intensity}
For each $c \in \mathcal{C}$,
\begin{align*}
\vect{T}^{(c)}\e+\sum_{c'>c}\vect{T}^{(c,c')}\e=\vect{0},
\end{align*}
and, in the hold-and-jump model,
\begin{align*}
\vect{T}^{(0)}\e+\sum_{c=1}^C\vect{T}^{(0,c)}\e=\vect{0}.
\end{align*}
Moreover $\vect{Q}^{(c)}$ in \eqref{eq:total-intensity-layer} and, in the hold-and-jump model, $\vect{Q}^{(0)}$ in \eqref{eq:Q0-holdjump}, are irreducible on $E$ for every $c \in \mathcal{C}$.
\end{assumption}

We now turn attention to the random launch height mechanism itself. For every admissible pair of colors $0\le c<c'\le C$ appearing in the chosen model and every $i,j\in E$, let $F_{ij}^{(c,c')}$ be a probability distribution on $(0,\infty)$. When a second-kind transition $(c,i)\to(c',j)$ occurs from a state $(\bm{x},c,i)$, where $\bm{x}$ is a realization of $\bm X(t)$, an independent random copy $Y$ of this distribution is sampled, and the new state becomes
\begin{equation}
\label{eq:jump-state-layer}
(\bm{x},c,i)\longmapsto(\bm{x}+Y\bm{e}_{c'},c',j), \qquad Y\sim F_{ij}^{(c,c')},
\end{equation}
where $\bm{e}_{c'}$ is the $C$-dimensional column-vector whose entries are zero apart from the $c'$th coordinate that is 1. Only in the hold-and-jump model do we additionally have boundary launches
\begin{align*}
(\bm 0,0,i)\longmapsto(Y\bm{e}_{c},c,j),
\qquad Y\sim F_{ij}^{(0,c)}.
\end{align*}
In all cases, the random copy $Y$ sampled at a given launch is independent of the Brownian motions driving the active color evolution, of the background phase process, and of every other random copy of $Y$ sampled at every other launch.
With the continuous dynamics, transition intensities, and launch-height mechanism in place, $Z=(Z(t))_{t\ge0}$ is a well-defined non-explosive strong Markov process on $\mathcal S^{\mathrm H}$ or $\mathcal S^{\mathrm R}$.

\begin{remark}
A second-kind transition creates new preempting work, and its random launch height serves two purposes at once. As discussed in the introduction, its strict positivity keeps the newly created Brownian layer away from its own depletion boundary, preventing the immediate downcrossing that a layer launched exactly at zero would otherwise exhibit. At the same time, its value represents the nonzero amount of work such a preempting item carries, for instance its service demand, execution budget, or computational size. Its random size captures variability across such items, and the dependence of the launch-height distribution $F_{ij}^{(c,c')}$ on the source and target colors and phases lets the workload distribution reflect the state of the system at the instant of preemption.
\end{remark}

\section{Backward Recursion and Stationary Distribution}
\label{sec:layer_analysis}

This section constructs the joint stationary law of the colored workload stack in the workload-by-color coordinates $\bm x=(x_1,\ldots,x_C)^\top$, with total workload the derived quantity $w=\sum_{c=1}^C x_c$. The law is described face by face, where a face collects the states in which a fixed subset of colors is occupied and every other color sits at zero. On each face the density factors into a chain of launch-density kernels, one for every occupied color lying above a bottom anchor.

In the hold-and-jump model the anchor is the phase mass at the empty state. In the regulated-base model it is the density on the base face, where color $1$ is active and no higher color is present.

A return matrix records the phase distribution after a complete higher-color excursion and depletion, while a launch-density kernel records the occupation generated while the launched color itself is active. The higher-color recursion is common to both boundary conventions, which differ only in the bottom anchor.

\subsection{Backward recursion for the color exponents}

Fix a depleting color $\ell$. In the hold-and-jump model this means $\ell\in\mathcal C$, whereas in the regulated-base model it means $\ell\in\mathcal C\setminus\{1\}$. Conditional on $A(0)=\ell$ and $X_\ell(0)=y>0$, define the real-time depletion epoch
\begin{align*}
\tau^{(\ell)}_0=\inf\{t\ge0:X_\ell(t)=0\}.
\end{align*}
Let $\vect\Phi_\ell(y)\in\R^{p\times p}$ have entries
\begin{equation}
\label{eq:Phi-y}
(\vect\Phi_\ell(y))_{jk}=\Prob\!\big(J(\tau^{(\ell)}_0)=k \big|\, A(0)=\ell,\ X_\ell(0)=y,\ J(0)=j\big).
\end{equation}
Lemma~\ref{lem:active-censoring} below shows, by backward recursion on the color index, that $\tau_0^{(\ell)}<\infty$ almost surely under the stability condition. Consequently,
\begin{equation}
\label{eq:Phi-rowsum}
\vect\Phi_\ell(y)\e=\e,\qquad y>0,
\end{equation}
so $\vect\Phi_\ell(y)$ is stochastic and its $j$th row is the phase distribution at depletion from launch phase $j$.
For every $0\le c<\ell\le C$ and $i,j\in E$, let $F_{ij}^{(c,\ell)}$ be the jump-height distribution introduced in Section~\ref{sec:model}, and define the averaged depletion kernels
\begin{equation}
\label{eq:avg-Phi}
\overline{\vect{\Phi}}_{ij}^{(c,\ell)}=\int_{(0,\infty)} \bm{e}_{j}^{\top}\vect{\Phi}_\ell(y)\,F_{ij}^{(c,\ell)}(\dd y),
\end{equation}
where $\bm e_j\in\R^p$ denotes the $j$-th standard basis column-vector of the phase space $E$ (not to be confused with the $C$-dimensional $\bm e_{c'}$ of \eqref{eq:jump-state-layer}), and $\overline{\vect{\Phi}}_{ij}^{(c,\ell)}$ is a row vector of dimension $p$. We define the row-wise return operator by
\begin{equation}
\label{eq:return-op-correct}
\Rop{c}{\ell}_{i\bullet}=\sum_{j\in E} T^{(c,\ell)}_{ij}\,\overline{\vect{\Phi}}_{ij}^{(c,\ell)},\qquad i\in E.
\end{equation}
Stacking these row vectors, one for each phase $i\in E$, defines the full matrix $\Rop{c}{\ell}$. Thus, $\Rop{c}{\ell} \in \R^{p\times p}$ converts the second-kind transition rates $\vect T^{(c,\ell)}$ into effective phase-to-phase rates. Specifically, its $(i,k)$-entry is the rate at which a second-kind transition out of phase $i$ triggers a complete excursion into color $\ell$ that eventually returns control to color $c$ in phase $k$.

To define the effective intensity matrix of the process, fix $c\in\mathcal{C}$, and suppose inductively that the higher-color objects have already been constructed for all $\ell>c$. Then, the intensity matrix of the censored color-$c$ dynamics is
\begin{equation}
\label{eq:effective-intensity-correct}
\widetilde{\vect{T}}^{(c)}=\vect{T}^{(c)}+\sum_{\ell>c}\Rop{c}{\ell}.
\end{equation}
Notice that this construction is the diffusion-scale counterpart of the censoring device \citet{VanHoudt2026} uses for colored fluid queues. In the context of censored Markov chains, censoring a process onto a subset of its state space means observing it only during the intervals when it occupies that subset, on a clock that skips every excursion outside it. This censored process is again Markov, with an intensity matrix equal to the original restriction corrected by the net effect of every excursion out of and back into the subset. In \eqref{eq:effective-intensity-correct}, the first-kind subintensity matrix $\vect T^{(c)}$ already describes phase transitions that leave $c$ active while it is active. Then, every remaining way of leaving $c$ is a second-kind transition into some $\ell>c$, which starts a possibly further-nested excursion beyond $\ell$, and eventually returns control to $c$ once $\ell$ depletes. Because the active coordinate $X_c$ does not move while colors with larger indices than $c$ are active, this entire excursion is invisible to color $c$'s own clock, except for its effect on the phase at the moment control returns. This effect is encoded by the return operator $\Rop{c}{\ell}$ of \eqref{eq:return-op-correct}, which replaces the true excursion by a single instantaneous phase jump with the same net rate and return-phase law. Thus, the intensity matrix $\widetilde{\vect T}^{(c)}$ governs all the dynamics, with the phase process observed only while color $c$ is active, with every higher color folded in.
Figure~\ref{fig:censoring} illustrates this idea in terms of total workload. The precise active-time statement is given in Lemma~\ref{lem:active-censoring}.
In the hold-and-jump model, the same censoring idea also defines the empty-state intensity matrix
\begin{equation}
\label{eq:effective-intensity-empty}
\widetilde{\vect T}^{(0)}=\vect T^{(0)}+\sum_{\ell=1}^C\Rop{0}{\ell}.
\end{equation}

\begin{figure}[htb!]
\centering

\begin{subfigure}[t]{0.48\textwidth}
\centering
\resizebox*{!}{3.6cm}{
\begin{tikzpicture}[x=1cm,y=1cm,>=Latex,thick]

\draw[->] (0,0) -- (6.8,0);
\node[above left,font=\footnotesize] at (6.8,0) {$t$};

\draw[->] (0,0) -- (0,2.8);
\draw[gray!60,densely dotted] (0,0) -- (6.6,0);
\node[below left] at (0,0) {$0$};

\draw[very thick,blue!70!black]
  plot[smooth] coordinates {
    (0.2,0.55)
    (0.8,0.95)
    (1.2,1.15)
    (1.7,1.02)
    (2.0,1.18)
  };

\draw[very thick,red!75!black,dashed,->]
  (2.0,1.18) -- (2.0,2.05);
\node[above left,red!75!black,font=\footnotesize]
  at (2.0,2.05) {launch};

\draw[very thick,red!75!black]
  plot[smooth] coordinates {
    (2.0,2.05)
    (2.6,2.4)
    (3.2,2.1)
    (3.8,1.7)
    (4.6,1.18)
  };

\draw[very thick,blue!70!black]
  plot[smooth] coordinates {
    (4.6,1.18)
    (5.0,0.95)
    (5.4,0.75)
    (5.8,0.85)
    (6.2,0.65)
  };

\node[blue!70!black,font=\footnotesize]
  at (1.1,1.55) {color $c$};
\node[red!75!black,font=\footnotesize]
  at (3.2,2.6) {color $\ell$};
\node[blue!70!black,font=\footnotesize]
  at (5.7,1.15) {color $c$};

\draw[densely dashed,gray]
  (2.0,0) -- (2.0,1.18);
\draw[densely dashed,gray]
  (4.6,0) -- (4.6,1.18);

\node[below] at (2.0,0) {$t_1$};
\node[below] at (4.6,0) {$t_2$};

\draw[<->,gray]
  (2.08,0.28) -- (4.52,0.28)
  node[midway,above,font=\footnotesize]
  {folded interval};

\filldraw[black] (4.6,1.18) circle (1.3pt);

\end{tikzpicture}
}
\caption{Real time}
\label{fig:censoring-a}
\end{subfigure}
\hfill
\begin{subfigure}[t]{0.48\textwidth}
\centering
\resizebox*{!}{3.6cm}{
\begin{tikzpicture}[x=1.35cm,y=1cm,>=Latex,thick]

\draw[->] (0,0) -- (4.2,0);
\node[above left,font=\footnotesize]
  at (4.2,0) {censored time};

\draw[->] (0,0) -- (0,2.8);
\draw[gray!60,densely dotted] (0,0) -- (4.0,0);
\node[below left] at (0,0) {$0$};

\draw[very thick,blue!70!black]
  plot[smooth] coordinates {
    (0.2,0.55)
    (0.8,0.95)
    (1.2,1.15)
    (1.7,1.02)
    (2.0,1.18)
  };

\draw[very thick,blue!70!black]
  plot[smooth] coordinates {
    (2.0,1.18)
    (2.4,0.95)
    (2.8,0.75)
    (3.2,0.85)
    (3.6,0.65)
  };

\node[blue!70!black,font=\footnotesize]
  at (1.1,1.55) {color $c$};

\draw[densely dashed,gray]
  (2.0,0) -- (2.0,2.0);
\filldraw[black] (2.0,1.18) circle (1.3pt);

\node[below,font=\footnotesize,align=center]
  at (2.0,0)
  {$t_1$ and $t_2$ identified};

\node[font=\footnotesize,align=center]
  at (2.0,2.4)
  {instantaneous phase jump\\
   governed by $\Rop{c}{\ell}$};

\end{tikzpicture}
}
\caption{Censored clock for color $c$}
\label{fig:censoring-b}
\end{subfigure}

\caption{Active-time censoring for color $c$ in the total-workload
framework. In real time, the higher-color excursion occupies the interval
$(t_1,t_2)$ (left panel). Under the censored clock for color $c$, this
interval is deleted, so that $t_1$ and $t_2$ are folded into the same
instant and the excursion is replaced by an instantaneous phase transition
governed by $\Rop{c}{\ell}$ (right panel).}
\label{fig:censoring}
\end{figure}

As in Section~\ref{sec:background-mmbm}, for each color $c \in \mathcal{C}$ we write
\begin{align*}
\vect{\Delta}_{\mu}^{(c)}=\diag(\mu_i^{(c)}\st i\in E),
\qquad
\vect{\Delta}_{\sigma}^{(c)}=\diag(\sigma_i^{(c)}\st i\in E),
\end{align*}
for the diagonal drift and volatility matrices governing color $c$.
{
\begin{assumption}[Recursive per-color stability]
\label{assump:stability}
For every color $c\in\mathcal C$, taken in decreasing order from $c=C$, the effective intensity matrix $\widetilde{\vect T}^{(c)}$ formed from the return operators of all colors above $c$ is irreducible and, with invariant probability row vector $\vect\xi^{(c)}$, satisfies
\begin{equation}
\label{eq:neg_drift}
\vect\xi^{(c)}\vect\Delta_\mu^{(c)}\e<0.
\end{equation}
In the hold-and-jump model, the effective empty-state intensity matrix $\widetilde{\vect T}^{(0)}$ in \eqref{eq:effective-intensity-empty} is also irreducible.
\end{assumption}

This is a condition on the recursively censored dynamics rather than on the primitive matrices $\vect T^{(c)}$ and $\vect T^{(c,\ell)}$ alone, so it is verified along the backward recursion itself, one color at a time.
}

\begin{lemma}
\label{lem:active-censoring}
Suppose that Assumptions~\ref{assump:intensity} and~\ref{assump:stability} hold, and let $\ell$ be a depleting color whose higher-color return operators have already been constructed. Define its accumulated active time and right-continuous inverse by
\begin{align*}
\kappa_\ell(t)&=\int_0^t\1_{\{A(s)=\ell\}}\,\dd s,
&
\gamma_\ell(u)&=\inf\{t\ge0:\kappa_\ell(t)>u\}.
\end{align*}
Up to depletion, the time-changed process
\begin{align*}
\widehat X_\ell(u)=X_\ell(\gamma_\ell(u)),
\qquad
\widehat J_\ell(u)=J(\gamma_\ell(u))
\end{align*}
is an MMBM with drift and volatility matrices $\vect\Delta_\mu^{(\ell)}$ and $\vect\Delta_\sigma^{(\ell)}$, and phase intensity matrix $\widetilde{\vect T}^{(\ell)}$. Its depletion time $\widehat\tau_0^{(\ell)}=\inf\{u\ge0:\widehat X_\ell(u)=0\}$ is almost surely finite, the corresponding real-time depletion epoch $\tau_0^{(\ell)}$ is almost surely finite, and $J(\tau_0^{(\ell)})=\widehat J_\ell(\widehat\tau_0^{(\ell)})$ almost surely. Moreover, for every $y>0$, $x>0$, and $j,k\in E$,
\begin{align*}
\E&\left[\int_0^{\tau_0^{(\ell)}}\1_{\{A(s)=\ell,\,J(s)=k,\,X_\ell(s)\in\dd x\}}\,\dd s\ \Big|\ A(0)=\ell,\,X_\ell(0)=y,\,J(0)=j\right]
\\
&=
\E\left[\int_0^{\widehat\tau_0^{(\ell)}}\1_{\{\widehat J_\ell(u)=k,\,\widehat X_\ell(u)\in\dd x\}}\,\dd u\ \Big|\ \widehat X_\ell(0)=y,\,\widehat J_\ell(0)=j\right],
\end{align*}
so the two occupation densities agree entry by entry as $p\times p$ matrices.
\end{lemma}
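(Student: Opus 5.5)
We argue by backward induction on the depleting color $\ell$, from $\ell=C$ downward. The induction hypothesis is that for every $\ell'>\ell$ the following hold: $\tau_0^{(\ell')}<\infty$ almost surely from every launch height, $\vect\Phi_{\ell'}(y)$ is stochastic, and hence $\Rop{\ell}{\ell'}$ is well defined with $\Rop{\ell}{\ell'}\e=\vect T^{(\ell,\ell')}\e$. By Assumption~\ref{assump:intensity}, the last identity gives $\widetilde{\vect T}^{(\ell)}\e=\vect 0$, so $\widetilde{\vect T}^{(\ell)}$ is a genuine intensity matrix.

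For the base case $\ell=C$, there are no second-kind transitions out of $C$. The process therefore never leaves color $C$ before depletion, so $\kappa_C(t)=t$ up to $\tau_0^{(C)}$ and the time change is trivial. Moreover $\widetilde{\vect T}^{(C)}=\vect T^{(C)}=\vect Q^{(C)}$, and the claim reduces to the definition of the MMBM in \eqref{eq:sde-layer-colored}.

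For the inductive step, decompose the path started at $(A,X_\ell,J)=(\ell,y,j)$ into alternating segments. Active segments end either at depletion or at a second-kind transition into some $\ell'>\ell$. Excursion segments are maximal intervals with $A>\ell$. On an active segment, $X_\ell$ follows \eqref{eq:sde-layer-colored} with phase dynamics $\vect T^{(\ell)}$, killed at total rate $\sum_{\ell'>\ell}\vect T^{(\ell,\ell')}\e$. An excursion launched into $\ell'$ at phase $j'$ with height $Y\sim F^{(\ell,\ell')}_{ij'}$ is, by the strong Markov property at the launch epoch and the freezing of lower coordinates, an independent copy of the color-$\ell'$ problem. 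It does not depend on $X_\ell$, and it ends exactly at $\tau_0^{(\ell')}$, which is finite almost surely by the induction hypothesis.

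Such an excursion may itself launch colors above $\ell'$. These nested excursions are already absorbed in the law of $J(\tau_0^{(\ell')})$, through the induction hypothesis that $J(\tau_0^{(\ell')})=\widehat J_{\ell'}(\widehat\tau_0^{(\ell')})$ together with \eqref{eq:Phi-y}. Integrating over $Y$ and $j'$ shows that the excursion acts as an instantaneous phase jump $i\to k$ with rate $(\Rop{\ell}{\ell'})_{ik}$. After deleting the excursion intervals via $\gamma_\ell$, the process $(\widehat X_\ell,\widehat J_\ell)$ follows MMBM increments with the original $\vect\Delta^{(\ell)}_\mu,\vect\Delta^{(\ell)}_\sigma$ between phase jumps. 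The total phase jump intensity is $\vect T^{(\ell)}+\sum_{\ell'}\Rop{\ell}{\ell'}=\widetilde{\vect T}^{(\ell)}$, and the joint process is Markov.

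To make this rigorous, I would check the martingale problem for $\widehat Z$. For $f$ smooth and compactly supported in $(0,\infty)\times E$, apply optional stopping at excursion endpoints to the real-time generator identity. Because $\gamma_\ell$ skips excursion time, only active time contributes $\tfrac12\sigma^2f''+\mu f'$. The jump part compensates to $\widetilde{\vect T}^{(\ell)}$ once each launch is replaced by the conditional law of the return phase. Alternatively, one can build $\widehat Z$ pathwise from a Brownian motion indexed by active time, which is well defined because the driving noise is used only while $A=\ell$. The Brownian part on active time is a continuous martingale with quadratic variation $\int(\sigma^{(\ell)}_{J})^2\,\dd\kappa_\ell$, so by Dambis–Dubins–Schwarz the time-changed noise is again Brownian.

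Given this identification, finiteness of $\widehat\tau_0^{(\ell)}$ follows from Assumption~\ref{assump:stability}. The chain $\widetilde{\vect T}^{(\ell)}$ is irreducible with $\vect\xi^{(\ell)}\vect\Delta^{(\ell)}_\mu\e<0$, so the free MMBM satisfies $\widehat X_\ell(u)\to-\infty$ almost surely by the ergodic theorem applied to $\int\mu_{\widehat J}\,\dd u$, with the martingale part of order $o(u)$, and hence it hits $0$. We then have $\tau_0^{(\ell)}=\gamma_\ell(\widehat\tau_0^{(\ell)}-)$ plus nothing. This real time is finite because only finitely many excursions occur in finite active time: launch rates are bounded, and each excursion has finite length by induction. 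Continuity of $J$ across the identification gives $J(\tau_0^{(\ell)})=\widehat J_\ell(\widehat\tau_0^{(\ell)})$. The occupation identity follows from the change of variables $u=\kappa_\ell(s)$, since $\dd\kappa_\ell=\1_{\{A=\ell\}}\dd s$ and $\gamma_\ell$ inverts $\kappa_\ell$ on active times.

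The main obstacle is the rigorous Markov identification of the censored process, that is, showing that the excursion effects produce exactly the jump kernel $\widetilde{\vect T}^{(\ell)}$ while the diffusion coefficients are unchanged. I expect the martingale-problem route with strong Markov decoupling at launch epochs to be the cleanest way to handle this.
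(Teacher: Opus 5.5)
Your proposal is correct and follows essentially the same route as the paper's proof. Both arguments replace each higher-color excursion, via the strong Markov property, by an instantaneous phase jump with rate $\Rop{\ell}{\ell'}$, and keep the drift and volatility coefficients unchanged because $X_\ell$ is frozen; both then get active-time depletion from the negative censored drift, real-time finiteness by backward induction using bounded launch rates, and the occupation identity from the inverse clock. Your martingale-problem and Dambis--Dubins--Schwarz identification, the ergodic-theorem depletion argument, and the care with $\gamma_\ell(\widehat\tau_0^{(\ell)}-)$ only supply detail that the paper leaves implicit.
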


\begin{proof}
Censoring deletes every interval during which a color above $\ell$ is active. At a second-kind transition from phase $i$ into a higher color $m$, the strong Markov property replaces the deleted excursion by an instantaneous return to phase $k$ with rate given by the $(i,k)$ entry of $\Rop{\ell}{m}$. Hence the censored phase intensity is $\vect T^{(\ell)}+\sum_{m>\ell}\Rop{\ell}{m}=\widetilde{\vect T}^{(\ell)}$, while the coordinate coefficients are unchanged because $X_\ell$ is frozen throughout every deleted excursion. The negative censored drift in \eqref{eq:neg_drift} implies almost-sure depletion on the active-time clock. Real-time finiteness follows by backward induction: it is immediate for $\ell=C$, and for a lower color only finitely many higher-color launches can occur before its finite active-time depletion, because launch intensities are bounded, while every launched higher excursion has finite real duration by the induction hypothesis. The matrix identity follows directly from the definition of the inverse clock, since summing the real-time occupation over the periods with $A(s)=\ell$ reproduces the occupation on the active-time clock, and the depletion phase is the same on both clocks.
\end{proof}

{
For the top color $C$, $\widetilde{\vect T}^{(C)}=\vect T^{(C)}$. Lemma~\ref{lem:active-censoring} gives $\overline{\vect\Phi}_{ij}^{(c,\ell)}\e=1$, and therefore $\Rop{c}{\ell}\e=\vect T^{(c,\ell)}\e$. Consequently, at each backward step,
\begin{equation}
\label{eq:Ttilde-rowsum}
\widetilde{\vect T}^{(c)}\e
=\vect T^{(c)}\e+\sum_{\ell>c}\vect T^{(c,\ell)}\e
=\vect Q^{(c)}\e
=\vect0.
\end{equation}
Thus every recursively constructed $\widetilde{\vect T}^{(c)}$ is conservative. The same argument applies to $\widetilde{\vect T}^{(0)}$ in the hold-and-jump model.
}

The following result shows that, under the stated regularity conditions, the depletion probability matrix admits a matrix-exponential representation.

\begin{proposition}
\label{prop:backward}
{Suppose that Assumptions~\ref{assump:intensity} and~\ref{assump:stability} hold.}
Then, using the active-time MMBM of Lemma~\ref{lem:active-censoring} and iterating from $c=C$ down to $c=2$, the depletion probability matrix defined in \eqref{eq:Phi-y} admits the representation
\begin{align*}
\vect{\Phi}_c(y)=e^{\vect{U}_c y},\qquad y>0,
\end{align*}
where $\vect U_c$ is the unique depletion exponent solving
\begin{equation}
\label{eq:layer-qme}
\tfrac12(\vect{\Delta}_{\sigma}^{(c)})^2\vect U_c^2
+\vect{\Delta}_{\mu}^{(c)}\vect U_c
+\widetilde{\vect T}^{(c)}
=\vect0,
\end{equation}
for which $0$ is a simple eigenvalue with right eigenvector $\e$, and all nonzero eigenvalues lie in the open left half-plane.

{At color $1$, under the corresponding conditions in Assumption~\ref{assump:stability}:}
\begin{itemize}
\item \emph{Hold-and-jump model.} Color $1$ is depleting, and \eqref{eq:layer-qme} and {\eqref{eq:neg_drift}} again give a unique depletion exponent $\vect U_1$, with $\vect\Phi_1(y)=e^{\vect U_1 y}$, $y>0$.

\item \emph{Regulated-base model.} Color $1$ is regulated rather than depleted, and there is instead a unique stationary regulated-MMBM decay exponent $\vect K_{\mathrm{reg}}$ solving
\begin{equation}
\label{eq:regulated-base-qme}
\tfrac12 \vect{K}_{\mathrm{reg}}^{2}(\vect{\Delta}_{\sigma}^{(1)})^{2}-\vect{K}_{\mathrm{reg}}\vect{\Delta}_{\mu}^{(1)}+\widetilde{\vect{T}}^{(1)}=\vect{0},
\end{equation}
with eigenvalues in the open left half-plane, giving the anchor density
\begin{equation}
\label{eq:regulated-base-density}
\vect{\nu}^{\mathrm R}(x)=\vect{g}_{\mathrm{reg}}e^{\vect{K}_{\mathrm{reg}}x},
\qquad x>0,
\end{equation}
where $\vect{g}_{\mathrm{reg}}$ satisfies the reflecting boundary condition
\begin{equation}
\label{eq:regulated-base-boundary-g}
\vect{g}_{\mathrm{reg}}\left(\tfrac12\vect{K}_{\mathrm{reg}}(\vect{\Delta}_{\sigma}^{(1)})^{2}-\vect{\Delta}_{\mu}^{(1)}\right)=\vect{0},
\end{equation}
fixing its direction up to scale.
\end{itemize}
\end{proposition}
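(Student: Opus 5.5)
The argument runs by backward induction on the color index and reduces each depleting color to a classical first-passage problem for a single MMBM. Fix a depleting color $c$, and assume the return operators $\Rop{c}{\ell}$ for all $\ell>c$ are already built. For $c=C$ this is vacuous, since $\widetilde{\vect T}^{(C)}=\vect T^{(C)}$. By Lemma~\ref{lem:active-censoring}, on the active-time clock the pair $(\widehat X_c,\widehat J_c)$ is an MMBM with parameters $(\vect\Delta_\mu^{(c)},\vect\Delta_\sigma^{(c)},\widetilde{\vect T}^{(c)})$. The depletion phase also agrees with the real-time one, so $\vect\Phi_c(y)$ is the first-passage-to-$0$ phase matrix of this MMBM started at level $y$. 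By \eqref{eq:Ttilde-rowsum}, $\widetilde{\vect T}^{(c)}$ is conservative, and by Assumption~\ref{assump:stability} it is irreducible with negative mean drift. We are therefore exactly in the classical setting for downward first passage of an MMBM.

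The first step is the semigroup property. Paths are continuous and there are no downward jumps on the active clock. So, to reach $0$ from $y+z$, the process must first hit level $y$, and the strong Markov property at that time gives $\vect\Phi_c(y+z)=\vect\Phi_c(z)\vect\Phi_c(y)$. Measurability together with $\vect\Phi_c(y)\to\I$ as $y\downarrow0$ then yields $\vect\Phi_c(y)=e^{\vect U_c y}$ for some generator $\vect U_c$. Right continuity at $0$ holds because $\sigma_i^{(c)}>0$ makes the hitting time of $0$ from level $y$ tend to $0$.

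The second step derives \eqref{eq:layer-qme}. For fixed $k$, the function $h(y,j)=(e^{\vect U_c y})_{jk}$ is harmonic for the generator on $(0,\infty)$:
\begin{align*}
\tfrac12(\vect\Delta_\sigma^{(c)})^2 h''+\vect\Delta_\mu^{(c)}h'+\widetilde{\vect T}^{(c)}h=\vect0 .
\end{align*}
To justify this, I would apply It\^o's formula to $h$ up to $\tau_0\wedge\tau_M$, where $\tau_M$ is the exit time above level $M$, and then let $M\to\infty$. Substituting $h=e^{\vect U_c y}$ gives the quadratic equation times $e^{\vect U_c y}$, and since the exponential is invertible, \eqref{eq:layer-qme} follows. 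Recurrence implies $\vect\Phi_c(y)\e=\e$, which gives $\vect U_c\e=\vect0$. Boundedness of $\vect\Phi_c$ means no eigenvalue has positive real part. Irreducibility of $\widetilde{\vect T}^{(c)}$ makes $e^{\vect U_c y}$ a positive matrix for $y>0$, and Perron--Frobenius then shows $0$ is simple and the other eigenvalues lie in the open left half-plane.

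The main obstacle is \emph{uniqueness} among solutions of \eqref{eq:layer-qme}. The quadratic has $2p$ characteristic roots. Under negative drift, I would show via the determinant $\det(\tfrac12(\vect\Delta_\sigma^{(c)})^2 s^2+\vect\Delta_\mu^{(c)}s+\widetilde{\vect T}^{(c)})$ and a Rouché or homotopy argument in the drift that exactly $p$ of these roots lie in $\{\re s\le0\}$, with $0$ simple. A solution with the stated spectrum must have its eigenvalues exactly at these roots, and its eigenvectors are then determined as null vectors of the matrix polynomial. This fixes $\vect U_c$ when the roots are distinct, and the general case follows by continuity. I expect this root-counting step to be the hardest part, and I would lean on \citet{Asmussen1995,AhnRamaswami2017} for it. Once $\vect U_c$ is obtained, \eqref{eq:avg-Phi} and \eqref{eq:return-op-correct} produce $\Rop{c'}{c}$ for $c'<c$, which closes the induction. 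In the hold-and-jump model the same argument applies verbatim at $c=1$.

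For the regulated-base case at color $1$, Lemma~\ref{lem:active-censoring}'s censoring applied to color $1$ shows that, on its active clock, $X_1$ is a regulated MMBM with intensity $\widetilde{\vect T}^{(1)}$. The claims then follow from Section~\ref{sec:background-mmbm} with $\vect T$ replaced by $\widetilde{\vect T}^{(1)}$. Stability is given by \eqref{eq:neg_drift}, $\vect K_{\mathrm{reg}}$ is the left-half-plane solution of \eqref{eq:mmbm-U}, which is \eqref{eq:regulated-base-qme}, and the boundary condition \eqref{eq:mmbm-boundary-g} gives \eqref{eq:regulated-base-boundary-g}. Uniqueness of $\vect K_{\mathrm{reg}}$ follows from the same root count, now applied to the time-reversed (drift-negated) polynomial. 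Finally, $\vect g_{\mathrm{reg}}$ is fixed only up to scale, because the one-dimensional null space of the boundary matrix reflects uniqueness of the stationary law.
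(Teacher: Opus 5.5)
Your proposal is correct and follows the paper's own proof. Both argue by backward induction from $c=C$, reduce via Lemma~\ref{lem:active-censoring} to an MMBM with conservative, irreducible, negatively drifting intensity $\widetilde{\vect T}^{(c)}$, and then apply the classical first-passage and regulated-MMBM spectral theory. The paper simply cites that theory, whereas you also sketch the semigroup, harmonicity and Perron--Frobenius steps, all of which are sound.

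Two sub-steps are soft, but they are not genuine gaps, since the paper defers them to \citet{Asmussen1995,AhnMeini2020} as well:
\begin{itemize}
\item Continuity does not by itself transfer uniqueness to repeated roots. Use instead that the column space of $\bigl[\begin{smallmatrix}\I_p\\ \vect U_c\end{smallmatrix}\bigr]$ is invariant under the companion linearization, with restricted spectrum $\sigma(\vect U_c)$. It therefore equals the unique $p$-dimensional spectral subspace for $\{\re s\le 0\}$, which pins down $\vect U_c$.
\item The one-dimensionality of the boundary null space is purely algebraic. Equation \eqref{eq:regulated-base-qme} gives $\tfrac12\vect K_{\mathrm{reg}}(\vect\Delta_\sigma^{(1)})^2-\vect\Delta_\mu^{(1)}=-\vect K_{\mathrm{reg}}^{-1}\widetilde{\vect T}^{(1)}$, so $\vect g_{\mathrm{reg}}\propto\vect\xi^{(1)}\vect K_{\mathrm{reg}}$ without any appeal to uniqueness of the stationary law.
\end{itemize}
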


\begin{proof}
By Lemma~\ref{lem:active-censoring}, the process observed while a depleting color is active is the MMBM with phase intensity $\widetilde{\vect T}^{(c)}$, and its phase at active-time depletion coincides with its phase at real-time depletion. The proof is therefore a backward induction using the standard {first-passage} quadratic matrix equation on the phase space $E$. For $c=C$, \eqref{eq:layer-qme} is precisely the classical {first-passage depletion} equation with phase intensity matrix $\vect{T}^{(C)}$, and {\eqref{eq:neg_drift}} yields a unique depletion exponent {$\vect{U}_C$}, with the zero eigenvalue (right eigenvector $\e$) forced by the zero row sums of the intensity matrix $\vect T^{(C)}$ and the remaining eigenvalues confined to the open left half-plane by the negative-drift condition, this eigenvalue placement being the classical result for first-passage quadratic matrix equations of this type \citep{Rogers1994,Asmussen1995,AhnRamaswami2017,AhnMeini2020}. Assume next that the color objects have been constructed for all $\ell>c$. By \eqref{eq:Ttilde-rowsum}, $\widetilde{\vect{T}}^{(c)}$ is an intensity matrix, and {\eqref{eq:neg_drift}} is the usual negative-drift condition for the MMBM. Applying the same existence-and-uniqueness result yields the unique depletion exponent {$\vect{U}_c$} of \eqref{eq:layer-qme}. In the hold-and-jump model, the induction simply continues one step further to $c=1$, since color $1$ is depleting there and $\widetilde{\vect T}^{(1)}$ satisfies the same hypotheses. In the regulated-base model, color $1$ is never depleted, so it falls outside this induction entirely; instead, $\vect K_{\mathrm{reg}}$ follows from the classical spectral theory for a regulated MMBM with intensity matrix $\widetilde{\vect T}^{(1)}$, recalled in Section~\ref{sec:background-mmbm} and in \eqref{eq:mmbm-U}--\eqref{eq:mmbm-density} \citep{Asmussen1995,Rogers1994,AhnRamaswami2017,AhnMeini2020}, applied in place of $\vect T$.
\end{proof}
\begin{remark}
The function $\vect\nu^{\mathrm R}(x_1)$ is the phase-resolved density on $\overline\Theta_1\times\{1\}\times E$, the sector of the state space introduced in \eqref{eq:ambient-state-spaces} in which color $1$ is active and every higher color is empty. It is not, in general, the marginal density of $X_1$ obtained after also including the mass held in states where some higher color is active. Its scale, together with that mass, is determined by the global normalization in Theorem~\ref{thm:main-reg}.
\end{remark}

From a numerical perspective, the depletion exponents are obtained recursively, starting from the highest color and solving the classical first-passage MMBM quadratic matrix equation \eqref{eq:layer-qme} at each step. This recursion extends to $c=1$ in the hold-and-jump model, whereas the regulated-base model instead requires the solution of \eqref{eq:regulated-base-qme} together with the boundary equation \eqref{eq:regulated-base-boundary-g}. Efficient and reliable numerical methods for these matrix equations are available \citep{AhnRamaswami2017,AhnMeini2020}.

\subsection{Matrix transforms and occupation densities}
{
With the depletion exponent $\vect U_\ell$ constructed in Proposition~\ref{prop:backward}, fix $0\le c<\ell\le C$ and define the scalar transform
\begin{equation}
\label{eq:scalar-transform}
\mathscr M_{ij}^{(c,\ell)}(s)
=\int_{(0,\infty)}e^{sy}F_{ij}^{(c,\ell)}(\dd y),
\qquad \operatorname{Re}(s)\le0.
\end{equation}
For every probability law $F_{ij}^{(c,\ell)}$, this transform is finite on the closed left half-plane. Its matrix evaluation at the depletion exponent is defined directly by
\begin{equation}
\label{eq:matrix-transform}
\mathscr M_{ij}^{(c,\ell)}(\vect U_\ell)
:=\int_{(0,\infty)}e^{\vect U_\ell y}F_{ij}^{(c,\ell)}(\dd y).
\end{equation}
Because $e^{\vect U_\ell y}=\vect\Phi_\ell(y)$ is stochastic, the integral in \eqref{eq:matrix-transform} is well defined for every launch-height distribution, without a moment condition. Moreover,
\begin{align*}
\overline{\vect\Phi}_{ij}^{(c,\ell)}
=\bm e_j^\top\mathscr M_{ij}^{(c,\ell)}(\vect U_\ell),
\end{align*}
which recovers \eqref{eq:avg-Phi} and gives the return operator \eqref{eq:return-op-correct} in closed form.

\begin{assumption}[Finite launch-height means]
\label{assump:first-moment}
For every relevant pair of colors $(c,\ell)$ and every $i,j\in E$, the launch height $H_{ij}^{(c,\ell)}\sim F_{ij}^{(c,\ell)}$ satisfies
\begin{align*}
\E[H_{ij}^{(c,\ell)}]<\infty.
\end{align*}
\end{assumption}
The return matrices require no moment condition. The first moment enters only through the occupation kernel, whose entries grow linearly in the launch height, so Assumption~\ref{assump:first-moment} is what makes the integrated transfer matrices and the stationary normalization finite.
}

\begin{remark}
\label{rem:functional-calculus}
{
Suppose in addition that there exists $\varepsilon_{ij}>0$ with $\E[e^{\varepsilon_{ij}H_{ij}^{(c,\ell)}}]<\infty$. Then $\mathscr M_{ij}^{(c,\ell)}$ is analytic on $\{z\in\C:\re(z)<\varepsilon_{ij}\}$, an open neighborhood of $\sigma(\vect U_\ell)$, and the holomorphic functional calculus gives
\begin{equation}
\label{eq:matrix-lt}
\mathscr M_{ij}^{(c,\ell)}(\vect U_\ell)
=\frac{1}{2\pi\mathrm i}\oint_\Gamma
\mathscr M_{ij}^{(c,\ell)}(z)
(z\I_p-\vect U_\ell)^{-1}\,\dd z
=\int_{(0,\infty)}e^{\vect U_\ell y}F_{ij}^{(c,\ell)}(\dd y),
\end{equation}
where $\Gamma$ is any positively oriented contour contained in that domain and enclosing $\sigma(\vect U_\ell)$. The exponential-moment condition is therefore what the contour representation requires, whereas neither the matrix transform nor the stationary product form depends on it. See \citet{bladt2016use} for further details.
}
\end{remark}

{The transform $\mathscr M_{ij}^{(c,\ell)}(\vect U_\ell)$ settles the return calculation. The stationary densities require in addition the occupation accumulated by a color during its own active periods.}
Let $\vect{G}_\ell(y,x)$ denote the killed-at-zero Green kernel, defined by
\begin{equation}
\label{eq:green-def}
{(\vect G_\ell(y,x))_{jk}\,\dd x
=\E\left[\int_0^{\tau_0^{(\ell)}}
\1_{\{A(s)=\ell,\ J(s)=k,\ X_\ell(s)\in\dd x\}}\,\dd s
\ \Big|\ A(0)=\ell,\ X_\ell(0)=y,\ J(0)=j\right],}
\end{equation}
{the expected real time spent in phase $k$ with color $\ell$ active and its level in $\dd x$ before depletion, starting from active color $\ell$, level $y$, and phase $j$.} The launch-density kernel is
\begin{equation}
\label{eq:launch-op-correct}
\Lop{c}{\ell}(x)_{i\bullet}=\sum_{j\in E}T^{(c,\ell)}_{ij}\int_{(0,\infty)}\bm{e}_{j}^{\top}\vect{G}_\ell(y,x)\,F_{ij}^{(c,\ell)}(\dd y),\qquad i\in E.
\end{equation}
Stacking these row vectors, one for each phase $i\in E$, defines the full matrix $\Lop{c}{\ell}(x)$. In words, $\Lop{c}{\ell}(x)$ describes what happens after a single second-kind transition creates a new excursion into color $\ell$: starting from phase $i$ while color $c$ is active, a transition to phase $j$ at rate $T^{(c,\ell)}_{ij}$ launches color $\ell$ at a random height $Y\sim F_{ij}^{(c,\ell)}$, and the Green kernel $\vect G_\ell(Y,x)$ then gives the expected time {for which color $\ell$ is active at level $x$}, across every phase, before it depletes and returns control back down. Averaging over the random launch height and summing over the target phase $j$, $\Lop{c}{\ell}(x)$ is therefore the expected occupation-time density, at level $x$, {during the periods when color $\ell$ is active} following a launch out of phase $i$ in color $c$.

The following result, whose proof can be found in Appendix~\ref{app:layer-derivations}, states that, under previously established regularity conditions, the Green kernel in \eqref{eq:green-def} takes a closed formula in our model.
\begin{lemma}
\label{lem:green}
{Suppose that Assumptions~\ref{assump:intensity} and~\ref{assump:stability} hold, and fix a depleting color $\ell$.} Then $\vect U_\ell$ solves
\eqref{eq:layer-qme}, has a simple zero eigenvalue with right
eigenvector $\e$, and has all remaining eigenvalues in the open left
half-plane.
Let $\vect V_\ell$ denote the complementary solution of
\begin{equation}
\label{eq:V-def}
\tfrac12(\vect\Delta_\sigma^{(\ell)})^2\vect V_\ell^2
+\vect\Delta_\mu^{(\ell)}\vect V_\ell
+\widetilde{\vect T}^{(\ell)}
=\vect 0,
\end{equation}
whose eigenvalues lie in the open right half-plane. Standard MMBM
Wiener--Hopf theory gives this complementary solution and the
nonsingularity of $\vect V_\ell-\vect U_\ell$
\citep{Asmussen1995,AhnMeini2020}. Define
\begin{equation}
\label{eq:W-def}
\vect W_\ell
:=2\big[(\vect\Delta_\sigma^{(\ell)})^2
(\vect V_\ell-\vect U_\ell)\big]^{-1}.
\end{equation}
Then, the Green kernel appearing in \eqref{eq:launch-op-correct} is
\begin{equation}
\label{eq:green-closed-form}
\vect G_\ell(y,x)=
\begin{cases}
\big(e^{\vect V_\ell y}-e^{\vect U_\ell y}\big)
e^{-\vect V_\ell x}\,\vect W_\ell,
& 0\le y\le x,\\[4pt]
\big(e^{\vect U_\ell(y-x)}
-e^{\vect U_\ell y}e^{-\vect V_\ell x}\big)\vect W_\ell,
& y\ge x>0.
\end{cases}
\end{equation}
\end{lemma}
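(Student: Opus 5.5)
By Lemma~\ref{lem:active-censoring}, the real-time occupation density in \eqref{eq:green-def} equals the occupation density of the active-time process $(\widehat X_\ell,\widehat J_\ell)$ up to $\widehat\tau_0^{(\ell)}$. So it suffices to compute the Green kernel of a single MMBM with parameters $\vect\Delta_\mu^{(\ell)}$, $\vect\Delta_\sigma^{(\ell)}$ and conservative, irreducible intensity $\widetilde{\vect T}^{(\ell)}$, killed on hitting $0$. The statements about $\vect U_\ell$ (it solves \eqref{eq:layer-qme}, has a simple zero eigenvalue with right eigenvector $\e$, and all other eigenvalues in the open left half-plane) are exactly Proposition~\ref{prop:backward}. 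The existence of $\vect V_\ell$ and the invertibility of $\vect V_\ell-\vect U_\ell$ are taken from the cited Wiener--Hopf theory, so $\vect W_\ell$ is well defined.

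My plan is a verification argument on the backward (Kolmogorov) equation in the starting level $y$. For fixed $x>0$, write $\vect\Gamma(y)$ for the right-hand side of \eqref{eq:green-closed-form}. I would check four properties.

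\textbf{(i) The backward ODE off the diagonal.} For $y\ne x$,
\begin{align*}
\tfrac12(\vect\Delta_\sigma^{(\ell)})^2\vect\Gamma''(y)+\vect\Delta_\mu^{(\ell)}\vect\Gamma'(y)+\widetilde{\vect T}^{(\ell)}\vect\Gamma(y)=\vect 0 .
\end{align*}
This holds because $e^{\vect U_\ell y}$ and $e^{\vect V_\ell y}$ solve it by \eqref{eq:layer-qme} and \eqref{eq:V-def}, and right multiplication by constant matrices preserves solutions.

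\textbf{(ii) Boundary behaviour.} We have $\vect\Gamma(0)=\vect0$, and $\vect\Gamma$ is bounded as $y\to\infty$, since $e^{\vect U_\ell y}$ is stochastic.

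\textbf{(iii) Continuity at $y=x$.} Both branches equal $(\I-e^{\vect U_\ell x}e^{-\vect V_\ell x})\vect W_\ell$ there.

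\textbf{(iv) Derivative jump at $y=x$.} The jump is
\begin{align*}
\vect\Gamma'(x+)-\vect\Gamma'(x-)=(\vect U_\ell-\vect V_\ell)\vect W_\ell=-2(\vect\Delta_\sigma^{(\ell)})^{-2},
\end{align*}
so $\tfrac12(\vect\Delta_\sigma^{(\ell)})^2$ times the jump equals $-\I$. This is precisely the normalisation of a Green function for a diffusion with local variance $(\sigma^{(\ell)}_k)^2$.

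Next, fix a bounded, continuous, compactly supported vector $\bm h$ on $(0,\infty)$ and set $\bm u(y)=\int_0^\infty\vect\Gamma(y,x)\bm h(x)\,\dd x$. By (i)--(iv), $\bm u$ is $C^2$ on $(0,\infty)$, with the derivative jump of $\vect\Gamma$ producing the source term, and it solves
\begin{align*}
\tfrac12(\vect\Delta_\sigma^{(\ell)})^2\bm u''+\vect\Delta_\mu^{(\ell)}\bm u'+\widetilde{\vect T}^{(\ell)}\bm u=-\bm h,\qquad \bm u(0)=\bm 0,
\end{align*}
with $\bm u$ bounded. Applying Itô's formula for MMBMs (Dynkin's formula with the phase-jump compensator) to $u_{\widehat J(t)}(\widehat X(t))$ up to $\widehat\tau_0^{(\ell)}\wedge n$ then gives
\begin{align*}
u_j(y)=\E_{y,j}\!\left[\int_0^{\widehat\tau_0^{(\ell)}\wedge n}h_{\widehat J(u)}(\widehat X(u))\,\dd u\right]+\E_{y,j}\!\left[u_{\widehat J(\widehat\tau_0\wedge n)}(\widehat X(\widehat\tau_0\wedge n))\right].
\end{align*}
Let $n\to\infty$. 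The integral term converges by monotone convergence. The boundary term tends to zero by boundedness of $\bm u$, $\bm u(0)=\bm0$, and $\widehat\tau_0^{(\ell)}<\infty$ almost surely (Lemma~\ref{lem:active-censoring}). Since $\bm h$ is arbitrary, $\vect\Gamma(y,x)$ is the occupation density, i.e.\ $\vect G_\ell=\vect\Gamma$.

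The main obstacle I expect is justifying the limit $n\to\infty$ in the boundary term. This needs $\bm u$ bounded uniformly in $y$ and some control of $\widehat X$ before depletion. For the $y\ge x$ branch, the factor $e^{\vect U_\ell(y-x)}$ is stochastic, so with compactly supported $\bm h$ the function $\bm u$ is bounded on $[0,\infty)$; on the event $\{\widehat\tau_0^{(\ell)}=\infty\}$, which has probability zero, no issue arises. If dominated convergence proves delicate, I would fall back on finiteness of $\E\widehat\tau_0^{(\ell)}$ under the negative drift \eqref{eq:neg_drift}. I would prove this by applying the same Dynkin argument to the drift-corrected linear function $y\,\e+\bm a$, where $\bm a$ solves the Poisson equation $\widetilde{\vect T}^{(\ell)}\bm a=\vect\xi^{(\ell)}\vect\Delta_\mu^{(\ell)}\e\,\e-\vect\Delta_\mu^{(\ell)}\e$. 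A minor point is that $\bm u$ is only $C^1$ with a piecewise continuous second derivative if $\bm h$ is merely bounded; choosing $\bm h$ continuous avoids this, and Itô's formula then applies directly.
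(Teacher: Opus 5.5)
Your proof is correct, but it takes a different route from the paper. The paper verifies nothing directly. It imports two external results: Breuer's explicit MMBM scale matrix $\mathcal W_\ell(z)=(e^{\vect V_\ell z}-e^{\vect U_\ell z})\vect W_\ell$, and Ivanovs's terminating-barrier potential-density identity $\vect G_\ell(y,x)=\mathcal W_\ell(y)e^{\vect R x}-\mathcal W_\ell(y-x)$ with $\vect R=-\vect W_\ell^{-1}\vect V_\ell\vect W_\ell$. Both branches of \eqref{eq:green-closed-form} then follow in two lines of algebra.

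You instead check directly that the candidate kernel
\begin{itemize}
\item solves the backward equation off the diagonal,
\item vanishes at $y=0$ and is bounded,
\item is continuous across $y=x$, with derivative jump $(\vect U_\ell-\vect V_\ell)\vect W_\ell=-2(\vect\Delta_\sigma^{(\ell)})^{-2}$.
\end{itemize}
You then identify it with the occupation density via Dynkin's formula. All four computations are right.

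The paper's route is shorter and places the formula inside fluctuation theory. However, it relies on outside identities whose conventions must be matched by the reader: the orientation of the barrier, the shift of the initial level, and which solvent plays which role. Your route is self-contained. It also shows that $\vect W_\ell$ is exactly the Green-function normalization making $\tfrac12(\vect\Delta_\sigma^{(\ell)})^2$ times the derivative jump equal to $-\I$.

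It also uses very little input: only \eqref{eq:V-def}, invertibility of $\vect V_\ell-\vect U_\ell$, stochasticity of $e^{\vect U_\ell y}$, and almost-sure finiteness of $\widehat\tau_0^{(\ell)}$. The location of the spectrum of $\vect V_\ell$ is never used, because boundedness comes entirely from the $e^{\vect U_\ell y}$ factor.

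The step you flag as the main obstacle is not actually an obstacle. The boundary term equals $\E_{y,j}\big[u_{\widehat J(n)}(\widehat X(n));\,\widehat\tau_0^{(\ell)}>n\big]$, which is bounded by $\|\bm u\|_\infty\,\Prob_{y,j}(\widehat\tau_0^{(\ell)}>n)\to0$. So the fallback via $\E\,\widehat\tau_0^{(\ell)}<\infty$ is unnecessary, although your Poisson-equation argument for it is valid.

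Two small points need tidying:
\begin{itemize}
\item Take $\bm h\ge\vect 0$, or split $\bm h$ into positive and negative parts, so that monotone convergence applies.
\item Note that $\bm u'$ is bounded, or localize at a high level, so the Brownian term in Itô's formula is a true martingale on $[0,\widehat\tau_0^{(\ell)}\wedge n]$. Boundedness holds because beyond the support of $\bm h$, $\bm u'(y)=\vect U_\ell e^{\vect U_\ell y}\bm c$ for a constant vector $\bm c$.
\end{itemize}
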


Integrating the Green kernel over the active level gives the total phase-resolved occupation time before depletion.
\begin{lemma}
\label{lem:occupation}
Suppose that Assumptions~\ref{assump:intensity} and~\ref{assump:stability} hold, and fix a depleting color $\ell\in\mathcal C$. Let
$\vect m_\ell(y)=\int_0^\infty\vect G_\ell(y,x)\,\dd x$. Then
\begin{equation}
\label{eq:occupation-closed-form}
\vect m_\ell(y)=y\vect P_\ell+(\I_p-e^{\vect U_\ell y})\vect Q_\ell,
\end{equation}
where $\vect P_\ell=|d_\ell|^{-1}\e\vect\xi^{(\ell)}$,
$d_\ell=\vect\xi^{(\ell)}\vect\Delta_\mu^{(\ell)}\e<0$, and
$\vect Q_\ell$ is any solution of
$\widetilde{\vect T}^{(\ell)}\vect Q_\ell=-\I_p-\vect\Delta_\mu^{(\ell)}\vect P_\ell$.
The expression in \eqref{eq:occupation-closed-form} is independent of the chosen solution, and every entry of $\vect m_\ell(y)$ grows at most linearly in $y$.
\end{lemma}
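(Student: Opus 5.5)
Lemma~\ref{lem:occupation} can be proved without integrating the closed form \eqref{eq:green-closed-form} term by term. The plan is to characterize $\vect m_\ell(y)$ as the unique at-most-linearly-growing solution of a second-order matrix boundary-value problem. By Lemma~\ref{lem:active-censoring}, $\vect m_\ell(y)$ has entries equal to the expected total active time spent in phase $k$ before depletion, starting from level $y$ and phase $j$. This is the same as the expected occupation time of the censored MMBM $(\widehat X_\ell,\widehat J_\ell)$ with generator built from $\vect\Delta_\mu^{(\ell)}$, $\vect\Delta_\sigma^{(\ell)}$ and $\widetilde{\vect T}^{(\ell)}$. Applying the generator to $y\mapsto \vect m_\ell(y)$, either by Dynkin's formula or by differentiating the Green kernel in $y$ and noting that the jump in $\partial_y\vect G_\ell$ across $y=x$ is governed by $\vect W_\ell$, we expect
\begin{equation*}
\tfrac12(\vect\Delta_\sigma^{(\ell)})^2\vect m_\ell''(y)+\vect\Delta_\mu^{(\ell)}\vect m_\ell'(y)+\widetilde{\vect T}^{(\ell)}\vect m_\ell(y)=-\I_p,\qquad y>0,
\end{equation*}
with boundary condition $\vect m_\ell(0)=\vect 0$.

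Next we verify that the proposed expression solves this system. For the linear part $y\vect P_\ell$, the relation $\widetilde{\vect T}^{(\ell)}\e=\vect 0$ from \eqref{eq:Ttilde-rowsum} kills $y\widetilde{\vect T}^{(\ell)}\vect P_\ell$, leaving $\vect\Delta_\mu^{(\ell)}\vect P_\ell$. For the constant part $\vect Q_\ell$, applying the operator gives $\widetilde{\vect T}^{(\ell)}\vect Q_\ell$. For the part $-e^{\vect U_\ell y}\vect Q_\ell$, the quadratic equation \eqref{eq:layer-qme} shows that this term is annihilated by the operator. Adding the three contributions, the defining equation for $\vect Q_\ell$ gives $-\I_p$. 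The value at $y=0$ is clearly zero.

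Solvability of $\widetilde{\vect T}^{(\ell)}\vect Q_\ell=-\I_p-\vect\Delta_\mu^{(\ell)}\vect P_\ell$ comes from the Fredholm alternative. Since $\widetilde{\vect T}^{(\ell)}$ is irreducible and conservative, its range is $\{\vect v:\vect\xi^{(\ell)}\vect v=0\}$ columnwise. Moreover,
\begin{equation*}
\vect\xi^{(\ell)}(\I_p+\vect\Delta_\mu^{(\ell)}\vect P_\ell)=\vect\xi^{(\ell)}+d_\ell|d_\ell|^{-1}\vect\xi^{(\ell)}=\vect 0,
\end{equation*}
using $d_\ell<0$; this is exactly why $\vect P_\ell$ carries the factor $|d_\ell|^{-1}$. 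Two solutions for $\vect Q_\ell$ differ by $\e\vect r$ for some row vector $\vect r$. Since $e^{\vect U_\ell y}\e=\e$ by Proposition~\ref{prop:backward}, the product $(\I_p-e^{\vect U_\ell y})\e\vect r$ vanishes, so the formula does not depend on the choice. Linear growth follows because $e^{\vect U_\ell y}=\vect\Phi_\ell(y)$ is stochastic and hence bounded.

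The main obstacle is uniqueness: the ODE system has a $4p^2$-dimensional solution space, and we must show that $\vect m_\ell$ is the particular solution selected. Here we would use the spectral structure from Lemma~\ref{lem:green}. The homogeneous solutions are spanned by $e^{\vect U_\ell y}\vect A+e^{\vect V_\ell y}\vect B$. The true $\vect m_\ell$ grows at most linearly, which follows a priori from Wald-type reasoning: the expected depletion time of a negatively drifting MMBM from level $y$ is bounded by a constant times $(1+y)$. Since $\vect V_\ell$ has spectrum in the open right half-plane, linear growth forces $\vect B=\vect 0$. The condition at $0$ then forces $\vect A=\vect 0$. A fallback route that avoids the ODE argument is to integrate \eqref{eq:green-closed-form} in $x$ directly, using $\int_y^\infty e^{-\vect V_\ell x}\,\dd x=\vect V_\ell^{-1}e^{-\vect V_\ell y}$, and simplify with the two quadratic equations. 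This is purely algebraic but messier, because $\vect U_\ell$ is singular and its zero eigenvalue must be isolated.
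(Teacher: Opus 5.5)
Your proof is correct, but it follows a different route from the paper. The paper's route is the one you call the fallback. It integrates the two branches of \eqref{eq:green-closed-form} in $x$ to get $\vect m_\ell(y)=\int_0^y e^{\vect U_\ell s}\vect W_\ell\,\dd s+(\I_p-e^{\vect U_\ell y})\vect V_\ell^{-1}\vect W_\ell$. It then isolates the zero eigenvalue: from $\vect\xi^{(\ell)}$, both quadratic equations and $\tfrac12(\vect\Delta_\sigma^{(\ell)})^2(\vect V_\ell-\vect U_\ell)\vect W_\ell=\I_p$, it shows that the normalized left null vector $\vect\beta_\ell$ of $\vect U_\ell$ satisfies $\vect\beta_\ell\vect W_\ell=|d_\ell|^{-1}\vect\xi^{(\ell)}$. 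Hence the spectral projection at $0$ sends $\vect W_\ell$ to $\vect P_\ell$, which yields the explicit choice $\vect Q_\ell=\int_0^\infty(e^{\vect U_\ell s}\vect W_\ell-\vect P_\ell)\,\dd s+\vect V_\ell^{-1}\vect W_\ell$; the linear system for $\vect Q_\ell$ is checked afterwards.

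You instead characterize $\vect m_\ell$ through $\tfrac12(\vect\Delta_\sigma^{(\ell)})^2\vect m_\ell''+\vect\Delta_\mu^{(\ell)}\vect m_\ell'+\widetilde{\vect T}^{(\ell)}\vect m_\ell=-\I_p$ with $\vect m_\ell(0)=\vect 0$. Verification of the candidate is then three lines, and solvability is the Fredholm alternative, which explains the factor $|d_\ell|^{-1}$ more transparently than the paper's $\vect\beta_\ell\vect W_\ell$ identity. Identification comes from the $\vect U_\ell$/$\vect V_\ell$ spectral dichotomy. The paper's route is purely algebraic given Lemma~\ref{lem:green} and produces a concrete $\vect Q_\ell$. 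Yours is much lighter on algebra but moves the burden onto analytic facts about the true $\vect m_\ell$, which you should prove rather than \emph{expect}.

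Your second option does this cleanly:
\begin{itemize}
\item Both branches of \eqref{eq:green-closed-form} solve the homogeneous equation in $y$ and agree at $y=x$. The derivative jump there is $(\vect U_\ell-\vect V_\ell)\vect W_\ell=-2(\vect\Delta_\sigma^{(\ell)})^{-2}$, so integrating in $x$ gives your equation with right-hand side exactly $-\I_p$.
\item $\vect G_\ell(0,x)=\vect 0$ gives the boundary value.
\item Linear growth follows from the strong Markov property at successive unit downward passages, $\E_{y,j}[\widehat\tau_0^{(\ell)}]\le\lceil y\rceil\max_k\E_{1,k}[\widehat\tau_0^{(\ell)}]$. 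This bounds every entry, since the entries of $\vect m_\ell(y)$ are nonnegative with row sums $\E_{y,j}[\widehat\tau_0^{(\ell)}]$.
\end{itemize}
Dynkin's formula applied directly to $\vect m_\ell$ would presuppose the very regularity in question.

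Two small corrections. The homogeneous solution space for $p\times p$ matrix-valued solutions has dimension $2p^2$, not $4p^2$. Also, it equals $\{e^{\vect U_\ell y}\vect A+e^{\vect V_\ell y}\vect B\}$ only because the initial-data map $(\vect A,\vect B)\mapsto(\vect A+\vect B,\vect U_\ell\vect A+\vect V_\ell\vect B)$ is injective. Its Schur complement is $\vect V_\ell-\vect U_\ell$, so you should cite the nonsingularity of $\vect V_\ell-\vect U_\ell$ from Lemma~\ref{lem:green} at that step.
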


{
Lemma~\ref{lem:green} also gives an explicit matrix-transform representation of the launch-density kernel in \eqref{eq:launch-op-correct}. For $x>0$ and a matrix argument $\vect A$, define
\begin{equation}
\label{eq:truncated-transforms}
\begin{aligned}
g_{ij}^{(c,\ell)}(x;\vect A)
&:=\int_{(0,x]} e^{\vect A y}F_{ij}^{(c,\ell)}(\dd y),
&
h_{ij}^{(c,\ell)}(x;\vect A)
&:=\mathscr M_{ij}^{(c,\ell)}(\vect A)-g_{ij}^{(c,\ell)}(x;\vect A).
\end{aligned}
\end{equation}
The truncated transform is well defined for every matrix $\vect A$. In the formula below, the untruncated remainder is evaluated only at $\vect A=\vect U_\ell$, where \eqref{eq:matrix-transform} is finite for every launch law.
}

\begin{corollary}
\label{cor:launch-fc}
{Suppose that the hypotheses of Lemma~\ref{lem:green} hold.} Then, for every $x>0$,
\begin{equation}
\label{eq:launch-fc}
\begin{aligned}
\Lop{c}{\ell}(x)_{i\bullet}
={}&\sum_{j\in E}T^{(c,\ell)}_{ij}\,\bm e_{j}^{\top}
\Big\{
\big[g_{ij}^{(c,\ell)}(x;\vect V_\ell)-g_{ij}^{(c,\ell)}(x;\vect U_\ell)\big]e^{-\vect V_\ell x}
\\
&\hspace{4.4cm}
+h_{ij}^{(c,\ell)}(x;\vect U_\ell)
\big[e^{-\vect U_\ell x}-e^{-\vect V_\ell x}\big]
\Big\}\vect W_\ell,
\qquad i\in E.
\end{aligned}
\end{equation}
\end{corollary}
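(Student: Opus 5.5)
The plan is to substitute the closed form \eqref{eq:green-closed-form} of Lemma~\ref{lem:green} into the definition \eqref{eq:launch-op-correct} of $\Lop{c}{\ell}(x)$. For fixed $i,j$ and $x>0$, we split the launch-height integral over $(0,\infty)$ into the two regions $(0,x]$ and $(x,\infty)$, matching the two branches of the Green kernel. The boundary point $y=x$ is harmless, because both branches give $(e^{\vect V_\ell x}-e^{\vect U_\ell x})e^{-\vect V_\ell x}\vect W_\ell$ there; we therefore assign it to $(0,x]$, consistent with the truncation in \eqref{eq:truncated-transforms}.

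On $(0,x]$ the factor $e^{-\vect V_\ell x}\vect W_\ell$ does not depend on $y$, so it can be pulled out on the right. The remaining integral is then
\begin{align*}
\int_{(0,x]}(e^{\vect V_\ell y}-e^{\vect U_\ell y})F_{ij}^{(c,\ell)}(\dd y)=g_{ij}^{(c,\ell)}(x;\vect V_\ell)-g_{ij}^{(c,\ell)}(x;\vect U_\ell).
\end{align*}
This is a finite integral of bounded continuous matrix functions against a probability measure over a compact range, so no moment condition is needed even though $\vect V_\ell$ has eigenvalues in the right half-plane.

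On $(x,\infty)$ we factor $e^{\vect U_\ell(y-x)}=e^{\vect U_\ell y}e^{-\vect U_\ell x}$, which is valid because both factors are powers of the same matrix and therefore commute. The integrand becomes $e^{\vect U_\ell y}\big[e^{-\vect U_\ell x}-e^{-\vect V_\ell x}\big]\vect W_\ell$, and its integral is $\int_{(x,\infty)}e^{\vect U_\ell y}F_{ij}^{(c,\ell)}(\dd y)=\mathscr M_{ij}^{(c,\ell)}(\vect U_\ell)-g_{ij}^{(c,\ell)}(x;\vect U_\ell)=h_{ij}^{(c,\ell)}(x;\vect U_\ell)$. This is finite for every launch law because $e^{\vect U_\ell y}=\vect\Phi_\ell(y)$ is stochastic, by Proposition~\ref{prop:backward} and \eqref{eq:matrix-transform}.

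Finally, we premultiply by $\bm e_j^\top$, multiply by $T^{(c,\ell)}_{ij}$, and sum over $j$, which gives \eqref{eq:launch-fc}.

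The main point to check is integrability on the unbounded region, that is, that each piece may legitimately be pulled through the integral. This is settled by the stochastic bound on $e^{\vect U_\ell y}$ together with the fixed finite matrices $e^{-\vect U_\ell x}$, $e^{-\vect V_\ell x}$ and $\vect W_\ell$, so dominated convergence, or simply entrywise absolute integrability, applies. No exponential moments are needed, in contrast with the contour representation of Remark~\ref{rem:functional-calculus}.
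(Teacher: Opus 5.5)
Your proposal is correct and follows the paper's proof essentially step for step. Like the paper, you split the launch-height integral at $y=x$, substitute the two branches of \eqref{eq:green-closed-form}, obtain $g_{ij}^{(c,\ell)}(x;\vect V_\ell)-g_{ij}^{(c,\ell)}(x;\vect U_\ell)$ on $(0,x]$, and obtain $h_{ij}^{(c,\ell)}(x;\vect U_\ell)$ on $(x,\infty)$ after writing $e^{\vect U_\ell(y-x)}=e^{\vect U_\ell y}e^{-\vect U_\ell x}$, before weighting by $T^{(c,\ell)}_{ij}$ and summing over $j$. Your extra remarks are correct and only make explicit what the paper leaves implicit: the two branches agree at $y=x$, the integrand is bounded on $(0,x]$, and $e^{\vect U_\ell y}$ is stochastic on $(x,\infty)$.
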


{Corollary~\ref{cor:launch-fc} is used for the concrete evaluations in Section~\ref{sec:jumps};} its proof is given in Appendix~\ref{app:layer-derivations}. The remaining auxiliary proofs and the detailed product-form argument are also collected there.

\subsection{Product-form stationary distribution}

{We now give the multivariate stationary law under the hold-and-jump and regulated-base conventions. A key ingredient is the family of integrated transfer matrices}
\begin{equation}
\label{eq:A-def-correct}
\vect{M}^{(c,\ell)}=\int_0^\infty \Lop{c}{\ell}(x)\,\dd x,\qquad  0\le c<\ell\le C,
\end{equation}
which appear in the normalization of Theorems~\ref{thm:main} and~\ref{thm:main-reg}.

More explicitly, the $i$th row of \eqref{eq:A-def-correct} is
\begin{align*}
\big(\vect M^{(c,\ell)}\big)_{i\bullet}
=\sum_{j\in E}T^{(c,\ell)}_{ij}\int_{(0,\infty)}\bm e_j^\top\vect m_\ell(y)\,F_{ij}^{(c,\ell)}(\dd y),
\end{align*}
where $\vect m_\ell$ is given by Lemma~\ref{lem:occupation}. {Since its entries grow at most linearly, Assumption~\ref{assump:first-moment} implies that every entry of $\vect M^{(c,\ell)}$ is finite.}

We can now state precisely the product-form densities previewed in the introduction to this section. The stationary distribution is not simply a density spread over all of $\R_+^C$. Whenever fewer than $C$ colors have ever been launched, the unlaunched colors sit exactly at zero rather than near zero, so the occupied colors trace out a lower-dimensional face of $\R_+^C$ instead of filling out the whole space. Such a face has zero $C$-dimensional Lebesgue measure, yet the stationary law can still place positive probability on it. In the hold-and-jump model the most extreme case is when no color has ever been launched, giving an atom at the empty state. We therefore describe the density face by face.

For any nonempty set of occupied colors, write its elements in increasing order as
\begin{align*}
1\le c_1<\cdots<c_n\le C.
\end{align*}
The corresponding face of $\R_+^C$, together with the vector of its positive coordinates, is defined jointly by
\begin{align*}
\Theta_{c_1,\dots,c_n}
&=\set{\bm x\in\R_+^C\st x_{c_1},\dots,x_{c_n}>0 \text{ and } x_c=0 \text{ for } c\notin\{c_1,\dots,c_n\}},\\
\bm x_{>0}&=(x_{c_1},\dots,x_{c_n})^\top\in(0,\infty)^n,\qquad \bm x\in\Theta_{c_1,\dots,c_n}.
\end{align*}
Thus $c_1,\dots,c_n$ are simply the ordered indices of the positive coordinates on this face, and $\bm x_{>0}$ collects exactly those coordinates.

{
For either boundary convention $\mathrm K\in\{\mathrm H,\mathrm R\}$, the stationary density row vector on $\Theta_{c_1,\dots,c_n}$ is denoted $\vect\pi^{\mathrm K}_{c_1,\dots,c_n}(\bm x_{>0})$ and is defined, componentwise, by
\begin{align*}
\bigl(\vect\pi^{\mathrm K}_{c_1,\dots,c_n}(\bm x_{>0})\bigr)_i
\,\dd x_{c_1}\cdots\dd x_{c_n}
&=\lim_{t\to\infty}\Prob\Bigl(X_{c_r}(t)\in\dd x_{c_r},\ r=1,\dots,n,\\
&\hspace{3cm}
X_c(t)=0 \text{ for } c\notin\{c_1,\dots,c_n\},\ J(t)=i\Bigr),
\end{align*}
for $i\in E$ and $\bm x\in\Theta_{c_1,\dots,c_n}$, with $(\bm X(t),J(t))$ on the right-hand side taken under the hold-and-jump process when $\mathrm K=\mathrm H$ and under the regulated-base process when $\mathrm K=\mathrm R$. When the face is clear from $\bm x$, we write this density simply as $\vect\pi^{\mathrm K}(\bm x)$.

In the hold-and-jump model the stationary law also has an atom at the empty state, whose phase mass row vector is denoted $\vect p_0^{\mathrm H}$, with
\begin{align*}
(\vect p_0^{\mathrm H})_i=\lim_{t\to\infty}\Prob\bigl(\bm X(t)=\bm 0,\ J(t)=i\bigr),\qquad i\in E.
\end{align*}
}

In the regulated-base model, only faces containing color $1$ are admissible. {Unlike a coordinate that has depleted and thereafter remains zero while another color is active, the regulated coordinate $X_1$ only touches zero instantaneously while it is active,} an event of probability zero in stationarity, so it is strictly positive at almost every time. Every occupied face in the regulated-base model therefore has $c_1=1$, so their occupied indices satisfy
\begin{align*}
1=c_1<c_2<\cdots<c_n\le C.
\end{align*}

{Throughout the following product formulas, matrix products are taken from left to right in increasing index $r$, and an empty product equals $\I_p$.}

\begin{theorem}[Hold-and-jump product-form representation]
\label{thm:main}
{Suppose that Assumptions~\ref{assump:intensity}, \ref{assump:stability}, and~\ref{assump:first-moment} hold. Then the hold-and-jump model admits a stationary probability law.} On the face $\Theta_{c_1,\dots,c_n}$, the stationary density row vector
of the hold-and-jump model is
\begin{equation}
\label{eq:main-density}
\vect{\pi}^{\mathrm H}(\bm{x})
=
\vect{p}^{\mathrm H}_0
\Lop{0}{c_1}(x_{c_1})
\prod_{r=1}^{n-1}
\Lop{c_r}{c_{r+1}}(x_{c_{r+1}}).
\end{equation}
The boundary mass vector $\vect{p}^{\mathrm H}_0$ satisfies
\begin{equation}
\label{eq:main-boundary}
\vect{p}^{\mathrm H}_0\widetilde{\vect T}^{(0)}
=
\vect{p}^{\mathrm H}_0
\left(
\vect{T}^{(0)}
+
\sum_{c=1}^C\Rop{0}{c}
\right)
=
\vect{0}.
\end{equation}
Moreover, with
\begin{align*}
\mathfrak{M}^{\mathrm H}
=
\begin{bmatrix}
\vect{0} & \vect{M}^{(1,2)} & \cdots & \vect{M}^{(1,C)}\\
\vect{0} & \vect{0} & \cdots & \vect{M}^{(2,C)}\\
\vdots & \ddots & \ddots & \vdots\\
\vect{0} & \cdots & \vect{0} & \vect{0}
\end{bmatrix},
\end{align*}
$\vect{m}^{\mathrm H}=[\vect{M}^{(0,1)},\dots,\vect{M}^{(0,C)}]$, $\e^{\mathrm H}$
the column vector obtained by stacking $C$ copies of $\e$, and $\I^{\mathrm H}$
the identity matrix of the same dimension as $\mathfrak M^{\mathrm H}$, the normalization
condition is
\begin{equation}
\label{eq:normalization}
\vect{p}^{\mathrm H}_0\e
+
\vect{p}^{\mathrm H}_0\vect{m}^{\mathrm H}
(\I^{\mathrm H}-\mathfrak{M}^{\mathrm H})^{-1}
\e^{\mathrm H}
=
1.
\end{equation}
\end{theorem}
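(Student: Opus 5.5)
The plan is to build the stationary law from a regenerative structure with the empty state as regeneration point, then to read off the product form from nested Green kernels. Call the empty holding face $\{\bm 0\}\times\{0\}\times E$ the anchor and censor the whole process onto it. By Lemma~\ref{lem:active-censoring}, applied at every level, each launch from the empty state into colour $c$ from phase $i$ to phase $j$ generates an excursion. This excursion returns to the empty state almost surely in finite real time, and its return phase is governed by $\overline{\vect\Phi}^{(0,c)}_{ij}$. The censored phase process on the anchor is therefore the finite Markov chain with intensity $\widetilde{\vect T}^{(0)}$ of \eqref{eq:effective-intensity-empty}. It is conservative by the analogue of \eqref{eq:Ttilde-rowsum} and irreducible by Assumption~\ref{assump:stability}, so it has a unique invariant vector. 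The standard censoring relation (stationary mass on a subset is proportional to the invariant law of the censored chain) then gives \eqref{eq:main-boundary}, provided the full process is positive recurrent. Positive recurrence will follow once the expected total time spent per unit anchor time is shown to be finite, which is exactly the finiteness of the normalizing sum below.

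For the density on a face $\Theta_{c_1,\dots,c_n}$ with active colour $c_n$, I would use the cycle formula for the stationary measure, namely the expected occupation per unit anchor time. Consider a stationary unit of time in phase $i$ at the anchor. Launches into $c_1$ occur at rate $T^{(0,c_1)}_{ij}$ with height $Y\sim F^{(0,c_1)}_{ij}$. While $c_1$ is active, the expected occupation density at level $x_{c_1}$ in phase $k$ is $\int \bm e_j^\top\vect G_{c_1}(y,x_{c_1})F(\dd y)$, by \eqref{eq:green-def}. Combining these, the expected occupation density of the face $\Theta_{c_1}$ (with $c_1$ active) is $\vect p_0^{\mathrm H}\Lop{0}{c_1}(x_{c_1})$.

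The next step is to condition on the time spent by $c_1$ at level $x_{c_1}$ in phase $k$. From there, launches into $c_2$ occur at rate $T^{(c_1,c_2)}_{kj}$. During the nested excursion $X_{c_1}$ is frozen at $x_{c_1}$. The occupation of $c_2$ is again given by the Green kernel of Lemma~\ref{lem:active-censoring}, averaged over launch heights, which contributes the factor $\Lop{c_1}{c_2}(x_{c_2})$. The strong Markov property at each launch epoch and the independence of launch heights justify multiplying the kernels (via Campbell/compensator identities for the point process of launches). Induction on $n$ then yields \eqref{eq:main-density}, with the left-to-right product ordering matching the nesting. Colours skipped between consecutive $c_r$ and $c_{r+1}$ are exactly those with zero coordinate, and they contribute nothing because launches go directly from $c_r$ to $c_{r+1}$.

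For normalization, integrate \eqref{eq:main-density} over the face. By \eqref{eq:A-def-correct} and Fubini (entries are nonnegative), the integral equals $\vect p_0^{\mathrm H}\vect M^{(0,c_1)}\prod_r\vect M^{(c_r,c_{r+1})}\e$. Summing over all increasing chains $c_1<\dots<c_n$ is the same as expanding $\vect m^{\mathrm H}\sum_{k\ge0}(\mathfrak M^{\mathrm H})^k\e^{\mathrm H}$. Since $\mathfrak M^{\mathrm H}$ is strictly block upper triangular, it is nilpotent, so the series terminates and equals $(\I^{\mathrm H}-\mathfrak M^{\mathrm H})^{-1}$. Adding the atom gives \eqref{eq:normalization}. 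Finiteness of every $\vect M^{(c,\ell)}$ comes from Lemma~\ref{lem:occupation} together with Assumption~\ref{assump:first-moment}. This also gives the finite mean cycle length, and hence positive recurrence and existence of the stationary law.

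I expect the main obstacle to be the rigorous justification of the nested occupation identity. It must be shown that the expected real-time occupation of a face, per unit of anchor time, factors as the product of launch kernels. The subtle point is that $\vect G_{c_r}$ counts only time with $c_r$ active, while launches to $c_{r+1}$ occur during exactly that time; the quantity needed is therefore an integral against $\vect G_{c_r}$ of the rate $\vect T^{(c_r,c_{r+1})}$. My first attempt would be to write the occupation as a compensated sum over launch epochs, using the Lévy-system and compensator formula for the jump process, and then to apply the strong Markov property at each launch epoch with Lemma~\ref{lem:active-censoring} inside. This avoids any PDE verification of generator balance equations.
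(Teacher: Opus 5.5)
Your proposal is correct and follows essentially the paper's own argument. You identify $\vect p_0^{\mathrm H}$ as the invariant vector of the censored empty-state generator $\widetilde{\vect T}^{(0)}$, multiply the source occupation density by $\Lop{c_r}{c_{r+1}}(x_{c_{r+1}})$ (built from the censored killed Green kernels) along increasing colour chains, and normalise by integrating to $\vect M^{(c,\ell)}$ and using the nilpotency of $\mathfrak M^{\mathrm H}$. The only difference is framing: you justify stationarity through the regenerative excursion (cycle) formula with compensator identities at launch epochs, which also gives positive recurrence from the finiteness of the $\vect M^{(c,\ell)}$. The paper instead asserts invariance of the constructed measure through its single-launch balance identity.
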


\begin{theorem}[Regulated-base product-form representation]
\label{thm:main-reg}
{Suppose that Assumptions~\ref{assump:intensity}, \ref{assump:stability}, and~\ref{assump:first-moment} hold for the regulated-base model. Then the model admits a stationary probability law.} On the face
$\Theta_{1,c_2,\dots,c_n}$, the stationary density row vector is
\begin{equation}
\label{eq:main-density-reg}
\vect{\pi}^{\mathrm R}(\bm{x})
=
\vect{\nu}^{\mathrm R}(x_1)
\prod_{r=1}^{n-1}
\Lop{c_r}{c_{r+1}}(x_{c_{r+1}}),
\qquad x_1>0,
\end{equation}
where $c_1=1$ and $\vect{\nu}^{\mathrm R}$ has the form
\eqref{eq:regulated-base-density}, with the direction of
$\vect g_{\mathrm{reg}}$ fixed by the reflecting boundary condition
\eqref{eq:regulated-base-boundary-g}. Because color $1$ is only one component
of the full stack, $\vect{\nu}^{\mathrm{R}}$ is not normalized to integrate
to $1$ on its own. Its scale is fixed jointly with the excursion mass by the
global normalization below. {Thus, $\vect\nu^{\mathrm R}$ is the density on the base face and an anchor for the full stack, not the marginal stationary density of $X_1$ after higher-color faces have been included.}

With
\begin{align*}
\mathfrak{M}^{\mathrm R}
=
\begin{bmatrix}
\vect{0} & \vect{M}^{(2,3)} & \cdots & \vect{M}^{(2,C)}\\
\vect{0} & \vect{0} & \cdots & \vect{M}^{(3,C)}\\
\vdots & \ddots & \ddots & \vdots\\
\vect{0} & \cdots & \vect{0} & \vect{0}
\end{bmatrix},
\end{align*}
$\vect{m}^{\mathrm R}
=
[\vect{M}^{(1,2)},\dots,\vect{M}^{(1,C)}]$,
$\e^{\mathrm R}$ the column vector obtained by stacking $C-1$
copies of $\e$, and $\I^{\mathrm R}$ the identity matrix of the same dimension
as $\mathfrak M^{\mathrm R}$, the
normalization condition is
\begin{equation}
\label{eq:normalization-reg}
\int_0^\infty
\vect{\nu}^{\mathrm R}(x)\,\dd x
\left[
\e
+
\vect{m}^{\mathrm R}
(\I^{\mathrm R}-\mathfrak{M}^{\mathrm R})^{-1}
\e^{\mathrm R}
\right]
=
1.
\end{equation}
\end{theorem}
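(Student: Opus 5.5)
We prove Theorem~\ref{thm:main-reg} with a regenerative (cycle-based) occupation-measure argument in the spirit of the proof of Theorem~\ref{thm:main}. The base face $\overline\Theta_1\times\{1\}\times E$ plays the role of the empty state.

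\emph{Step 1: the censored base process.} Censor $Z$ to the times when $A(t)=1$. By Lemma~\ref{lem:active-censoring}, applied with the higher-color return operators $\Rop{1}{\ell}$, $\ell\ge2$, the censored pair $(X_1,J)$ is a regulated MMBM. Its phase intensity is $\widetilde{\vect T}^{(1)}$, which is conservative by \eqref{eq:Ttilde-rowsum}. The argument of the lemma carries over verbatim, since $X_1$ is frozen during excursions and the regulator $L$ only acts while color $1$ is active. By Assumption~\ref{assump:stability} and Proposition~\ref{prop:backward}, this process is positive recurrent. Its stationary density is proportional to $\vect g_{\mathrm{reg}}e^{\vect K_{\mathrm{reg}}x}$, with $\vect g_{\mathrm{reg}}$ fixed up to scale by \eqref{eq:regulated-base-boundary-g}.

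\emph{Step 2: positive recurrence of $Z$ and the ratio formula.} Each excursion above color $1$ has finite expected length. Its expected length is bounded by entries of $\vect M^{(1,\ell)}$ and of the iterated series $\sum_k(\mathfrak M^{\mathrm R})^k$. These are finite by Assumption~\ref{assump:first-moment} and Lemma~\ref{lem:occupation}, and the series is a finite sum because $\mathfrak M^{\mathrm R}$ is strictly block upper triangular, hence nilpotent. Real time per unit of base active time therefore has finite mean, and $Z$ admits a stationary law. The law is obtained by taking the expected occupation over the real time corresponding to a unit of censored base time and normalizing.

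\emph{Step 3: the product form.} Decompose the occupation of a face $\Theta_{1,c_2,\dots,c_n}$ by the strong Markov property at second-kind launch epochs. While color $c_r$ is active in phase $i$ at level $x_{c_r}$, launches into $c_{r+1}$ in phase $j$ occur at rate $T^{(c_r,c_{r+1})}_{ij}$, with an independent height $Y\sim F^{(c_r,c_{r+1})}_{ij}$. Lower coordinates are frozen during the excursion, and by Lemma~\ref{lem:active-censoring} the real-time occupation of color $c_{r+1}$ before its depletion equals $\vect G_{c_{r+1}}(Y,\cdot)$. Integrating over $Y$ gives precisely $\Lop{c_r}{c_{r+1}}(x_{c_{r+1}})$, which is the compensator (Lévy system) identity for the launch point process. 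Occupation at deeper colors nested inside that excursion is handled by induction on $n$. The time spent at $(c_{r+1},x_{c_{r+1}},k)$ itself serves as the ``base'' for launches into $c_{r+2}$. Excursions into colors between $c_r$ and $c_{r+1}$, or above $c_n$, do not contribute to this face, because they leave the face's coordinates unchanged; they are already folded into the active-time clocks. Iterating from the base-face density gives \eqref{eq:main-density-reg}, with products ordered in increasing $r$.

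\emph{Step 4: normalization.} Integrate \eqref{eq:main-density-reg} over all faces using \eqref{eq:A-def-correct}. The total mass of faces whose top color is $c_n=\ell$ is
\[
\int_0^\infty\vect\nu^{\mathrm R}(x)\,\dd x\sum_{1<c_2<\cdots<c_n=\ell}\vect M^{(1,c_2)}\cdots\vect M^{(c_{n-1},\ell)}\e .
\]
Summing over chains is exactly the $\ell$-th block of $\vect m^{\mathrm R}(\I^{\mathrm R}-\mathfrak M^{\mathrm R})^{-1}$, using the nilpotent Neumann series. Adding the base face yields \eqref{eq:normalization-reg}, which fixes the scale of $\vect g_{\mathrm{reg}}$.

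The main obstacle is Step 3. It requires justifying that the real-time occupation on a face factorizes as the base density multiplied by launch kernels. This needs a careful compensator argument combining the frozen lower coordinates with the time change of Lemma~\ref{lem:active-censoring}, including measurability of the nested excursion decomposition. I would first establish the $n=2$ case via the Lévy system formula and then induct.
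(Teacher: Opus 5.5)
Your proposal is correct and follows essentially the same route as the paper. The anchor is the stationary density of the censored regulated base MMBM with phase intensity $\widetilde{\vect T}^{(1)}$; each launch multiplies the source-face density by $\Lop{c_r}{c_{r+1}}$ through the killed Green kernel; the product form comes from iterating along $1=c_1<\cdots<c_n$; and the normalization uses the finite Neumann series for $(\I^{\mathrm R}-\mathfrak M^{\mathrm R})^{-1}$. Your regenerative ratio-formula and L\'evy-system phrasing mainly makes explicit the existence and single-launch balance steps that the paper's appendix asserts more briefly.
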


\begin{proof}[Proof sketch of Theorems~\ref{thm:main} and~\ref{thm:main-reg}]
{The proof constructs a finite invariant measure directly.} Active-time censoring collapses complete higher-color excursions into the effective intensity matrices \(\widetilde{\vect T}^{(c)}\), so the killed Green kernel of a depleting color gives the real-time occupation density of that color. A second-kind launch from color \(c\) into \(\ell\) therefore multiplies the current sector density by the launch-density kernel \(\Lop{c}{\ell}\); iterating this single-launch balance along \(c_1<\cdots<c_n\) gives the product forms. The hold-and-jump anchor is the empty-state vector \(\vect{p}^{\mathrm H}_0\), while the regulated-base anchor is \(\vect{\nu}^{\mathrm{R}}(x_1)\). Integrating the sector densities and using \(\int_0^\infty\Lop{c}{\ell}(x)\,\dd x=\vect M^{(c,\ell)}\) gives the two normalizations. The detailed stationary launch-balance verification, including the normalization argument, is given in Appendix~\ref{app:layer-derivations}.
\end{proof}

Together with the empty-state atom $\vect{p}^{\mathrm H}_0$ in the hold-and-jump model, these face densities, one for each admissible sequence of occupied colors, account for every part of $\R_+^C$ carrying positive stationary probability, giving the complete stationary distribution. If the joint process is irreducible, this invariant probability law is unique and the process is positive recurrent.

\section{Computational Considerations and Examples}
\label{sec:jumps}

{Every step of the construction above is a finite matrix computation. This section records that recursion and then evaluates the launch-height transforms for four representative laws.}

{First, the matrix integral in \eqref{eq:matrix-transform}, needed to determine} $\overline{\vect\Phi}_{ij}^{(c,\ell)}=\bm e_j^\top\mathscr M_{ij}^{(c,\ell)}(\vect U_\ell)$, can be computed efficiently. In particular, if $\vect U_\ell$ is diagonalizable, i.e. we can write
\[{\vect U_\ell=\vect Z_\ell\diag(\lambda_{\ell,1},\ldots,\lambda_{\ell,p})\vect Z^{-1}_\ell,}\]
where {$\vect Z_\ell$} is the matrix whose columns are eigenvectors of $\vect U_\ell$ and $\lambda_{\ell,1},\ldots,\lambda_{\ell,p}$ the corresponding eigenvalues, then
\begin{align*}
\mathscr M_{ij}^{(c,\ell)}(\vect U_\ell)
={\vect Z_\ell}\diag\!\big(\mathscr M_{ij}^{(c,\ell)}(\lambda_{\ell,1}),\ldots,
\mathscr M_{ij}^{(c,\ell)}(\lambda_{\ell,p})\big){\vect Z^{-1}_\ell}.
\end{align*}
{When $\vect U_\ell$ is not diagonalizable, a Jordan decomposition may be used instead. Under the exponential-moment condition of Remark~\ref{rem:functional-calculus}, the same evaluation admits the contour representation \eqref{eq:matrix-lt}. See \citet{bladt2016use} for details.}

{The recursion runs as follows. The depletion equations are solved from color $C$ downward, the matrix transforms are evaluated and assembled into the return operators, the launch-density and integrated transfer matrices are formed, and the normalization is carried out through the strictly upper-triangular block matrix $\mathfrak M^{\mathrm H}$ or $\mathfrak M^{\mathrm R}$.}

{The examples below rely on the truncated transforms $g_{ij}^{(c,\ell)}$ and $h_{ij}^{(c,\ell)}$ of \eqref{eq:truncated-transforms} for the occupation calculations.}
{
Here $\vect A\in\{\vect U_\ell,\vect V_\ell\}$ and $F_{ij}^{(c,\ell)}$ is the launch-height distribution for a transition from phase $i$ in color $c$ to phase $j$ in color $\ell$. In each example, $H_{ij}^{(c,\ell)}\sim F_{ij}^{(c,\ell)}$. Substitution into Corollary~\ref{cor:launch-fc} gives the launch-density kernel.
}
\begin{example}[Deterministic launch height]
\label{ex:deterministic}
A degenerate launch height needs no functional calculus at all, since the integral defining $\mathscr M_{ij}^{(c,\ell)}(\vect A)$ collapses to a single evaluation of the exponential at the atom $h_{ij}^{(c,\ell)}$, and the same holds for its truncated version $g_{ij}^{(c,\ell)}(x;\vect A)$. For $F_{ij}^{(c,\ell)}=\delta_{h_{ij}^{(c,\ell)}}$, $h_{ij}^{(c,\ell)}>0$,
\begin{align*}
\mathscr M_{ij}^{(c,\ell)}(s)&=e^{s h_{ij}^{(c,\ell)}}, &
\mathscr M_{ij}^{(c,\ell)}(\vect U_\ell)&=e^{\vect U_\ell h_{ij}^{(c,\ell)}},\\
\overline{\vect\Phi}_{ij}^{(c,\ell)}&=\bm e_j^\top e^{\vect U_\ell h_{ij}^{(c,\ell)}}, &
g_{ij}^{(c,\ell)}(x;\vect A)&=\1_{\{h_{ij}^{(c,\ell)}\le x\}}e^{\vect A h_{ij}^{(c,\ell)}}.
\end{align*}
The complementary transform uses the indicator $\1_{\{h_{ij}^{(c,\ell)}>x\}}$, and the phase pair contributes $T_{ij}^{(c,\ell)}\bm e_j^\top\vect G_\ell(h_{ij}^{(c,\ell)},x)$ to $\Lop{c}{\ell}(x)$.
\end{example}
\begin{example}[Phase-type launch height]
\label{ex:ph}
A phase-type launch height has a rational Laplace transform in closed form, and its matrix version follows the same functional-calculus substitution as \eqref{eq:matrix-lt}, except that the operator being inverted now has to act jointly on the MMBM's own phase space and on the auxiliary phase-type states. This joint action is expressed through a Kronecker sum, using the fact that the exponential of a Kronecker sum factors as a Kronecker product of exponentials, $e^{(\vect A\otimes\I_q+\I_p\otimes\vect S)y}=e^{\vect Ay}\otimes e^{\vect Sy}$. Throughout, $\otimes$ denotes the Kronecker product of matrices. Let $H_{ij}^{(c,\ell)}\sim\mathrm{PH}(\vect\alpha_{ij}^{(c,\ell)},\vect S_{ij}^{(c,\ell)})$, with $\vect s_{ij}^{(c,\ell)}=-\vect S_{ij}^{(c,\ell)}\e$, and define
\begin{align*}
\vect{\mathcal K}_{ij}^{(c,\ell)}(\vect A)=\vect A\otimes\I_q+\I_p\otimes\vect S_{ij}^{(c,\ell)}.
\end{align*}
Then
\begin{align*}
\mathscr M_{ij}^{(c,\ell)}(s)
&=\vect\alpha_{ij}^{(c,\ell)}[-(\vect S_{ij}^{(c,\ell)}+s\I_q)^{-1}]\vect s_{ij}^{(c,\ell)},\\
\mathscr M_{ij}^{(c,\ell)}(\vect U_\ell)
&=(\I_p\otimes\vect\alpha_{ij}^{(c,\ell)})[-\vect{\mathcal K}_{ij}^{(c,\ell)}(\vect U_\ell)^{-1}](\I_p\otimes\vect s_{ij}^{(c,\ell)}),\\
g_{ij}^{(c,\ell)}(x;\vect A)
&=(\I_p\otimes\vect\alpha_{ij}^{(c,\ell)})\big[x\phi_1(x\vect{\mathcal K}_{ij}^{(c,\ell)}(\vect A))\big](\I_p\otimes\vect s_{ij}^{(c,\ell)}),
\end{align*}
where $\phi_1(\vect Z)=\sum_{n\ge0}\vect Z^n/(n+1)!$. Erlang laws are included as a special case.
\end{example}
\begin{example}[Weibull launch height]
\label{ex:weibull}
{Let $H_{ij}^{(c,\ell)}\sim\mathrm{Weibull}(k_{ij}^{(c,\ell)},\lambda_{ij}^{(c,\ell)})$. For $k_{ij}^{(c,\ell)}>1$ the moment generating function is entire, so the moment series converges on the whole complex plane and may be evaluated term by term at $\vect U_\ell$,}
\begin{align*}
\mathscr M_{ij}^{(c,\ell)}(s)
&=\sum_{n=0}^{\infty}\frac{\Gamma(1+n/k_{ij}^{(c,\ell)})}{n!}(\lambda_{ij}^{(c,\ell)}s)^n,\\
\mathscr M_{ij}^{(c,\ell)}(\vect U_\ell)
&=\sum_{n=0}^{\infty}\frac{\Gamma(1+n/k_{ij}^{(c,\ell)})}{n!}(\lambda_{ij}^{(c,\ell)}\vect U_\ell)^n.
\end{align*}
For $k_{ij}^{(c,\ell)}=1$ the law is exponential and
\begin{align*}
\mathscr M_{ij}^{(c,\ell)}(\vect U_\ell)=(\I_p-\lambda_{ij}^{(c,\ell)}\vect U_\ell)^{-1}.
\end{align*}
{When $0<k_{ij}^{(c,\ell)}<1$ the law still satisfies Assumption~\ref{assump:first-moment} and remains admissible for the stationary theory, but it has no positive exponential moment and the displayed moment series has zero radius of convergence. In that case $\mathscr M_{ij}^{(c,\ell)}(\vect U_\ell)$ is evaluated from the defining matrix integral \eqref{eq:matrix-transform}, or from the spectral formula above, where the scalar transform equals $1$ at the zero eigenvalue and is the Laplace transform at the remaining ones. Truncated transforms follow from \eqref{eq:truncated-transforms}.}
\end{example}
\begin{example}[Gamma launch height]
\label{ex:gamma}
The Gamma law again has a closed-form Laplace transform, so its matrix version follows directly from \eqref{eq:matrix-lt} exactly as in the deterministic and phase-type cases, with the fractional power $(\beta_{ij}^{(c,\ell)}\I_p-\vect U_\ell)^{-\alpha_{ij}^{(c,\ell)}}$ defined through the same holomorphic branch used throughout. Let $H_{ij}^{(c,\ell)}\sim\Gamma(\alpha_{ij}^{(c,\ell)},\beta_{ij}^{(c,\ell)})$ with rate parameterization. Then
\begin{align*}
\mathscr M_{ij}^{(c,\ell)}(s)
&=\left(\frac{\beta_{ij}^{(c,\ell)}}{\beta_{ij}^{(c,\ell)}-s}\right)^{\alpha_{ij}^{(c,\ell)}},\\
\mathscr M_{ij}^{(c,\ell)}(\vect U_\ell)
&=(\beta_{ij}^{(c,\ell)})^{\alpha_{ij}^{(c,\ell)}}
(\beta_{ij}^{(c,\ell)}\I_p-\vect U_\ell)^{-\alpha_{ij}^{(c,\ell)}}.
\end{align*}
The truncated transforms are the corresponding lower and upper incomplete-gamma matrix functions. Integer shape gives the Erlang phase-type case.
\end{example}

These examples show that the framework accommodates both deterministic and random launch heights, discrete random laws included, not only the continuous ones written out above. Once the corresponding scalar transform is specified, the return probabilities and launch-density kernels follow from the same matrix-functional construction, separating the workload-stack dynamics from the particular choice of launch-height distribution.

\section{Concluding Remarks}
\label{sec:conclusion}

We introduced a colored Markov-modulated Brownian motion model for nested preemption, in which a Brownian active layer sits on top of a stack of suspended work, and is itself preempted whenever new work arrives. Coloring not only identifies both the stack position and the priority of each layer, but also serves as a guarantee that the suspended stack can be described as a finite backward recursion. Specifically, working from the top color down, each higher-color excursion is folded into the dynamics of the color below it through a return matrix, which summarizes the excursion's net effect on the phase process. The excursion's contribution to the stationary density is captured separately by a launch-density kernel, which records the occupation time spent at each height before depletion.

Strictly positive random launch heights serve a double purpose in this construction. They represent the amount of work carried by a newly arriving or generated item, and they also remove the boundary ambiguity that would otherwise arise from a Brownian layer launched exactly at zero, whose path oscillates on both sides of the boundary immediately after being born there. The hold-and-jump and regulated-base boundary conventions share this entire higher-color machinery, and differ only in how the bottom of the stack behaves once every layer above it has depleted. The former holds at an empty state until new work arrives, contributing an atom to the stationary law. The latter reflects the lowest color at zero, contributing a continuous density anchored at the base.

Both conventions lead to an explicit product-form stationary density with one factor for each occupied color. Each factor is built from matrix exponentials and the launch-density kernels. Although the normalization sums over every admissible sequence of occupied colors, it reduces to a finite matrix computation. Under the finite-mean assumption, the return and launch operators are defined through matrix integrals for arbitrary launch-height laws, heavy-tailed ones included. When the launch height has a positive exponential moment, Remark~\ref{rem:functional-calculus} also gives the contour representation. The deterministic, phase-type, Weibull, and gamma examples of Section~\ref{sec:jumps} cover both cases. Practitioners may therefore change the launch-height distribution without repeating the underlying matrix-analytic derivation.

We identify two natural directions for further development. The first concerns interacting stacks, in which several colored workload stacks share resources or trigger launches in one another. This framework falls under the broader problem of multivariate stochastic fluid processes or multivariate MMBMs, which has remained difficult to handle in the literature, and would need progress on its own before its application to colored MMBM could be carried through.

The second extension concerns controlled launch or priority mechanisms. In such models, the color ordering or the launch-height distribution associated with a transition may depend on the current state of the stack, leading to state-dependent launch dynamics. This route seems more tractable, though the resulting process would likely fall outside the usual matrix-exponential framework, as inhomogeneity in the launch height breaks the time-homogeneity the current construction relies on.

\appendix
\section{Proofs of the Product-Form Representations}
\label{app:layer-derivations}

{This appendix proves the closed form for the Green kernel $\vect G_\ell(y,x)$, the integrated occupation kernel $\vect m_\ell(y)$, the launch-density formula, and the product-form Theorems~\ref{thm:main} and~\ref{thm:main-reg}. The active-time censoring result used throughout is stated and proved in Lemma~\ref{lem:active-censoring}.}

\begin{proof}[Proof of Lemma~\ref{lem:green}]
Let $\mathcal W_\ell$ be the scale matrix of the color-$\ell$ MMBM, with $\mathcal W_\ell(z)=\vect0$ for $z<0$. The explicit MMBM scale matrix on p.~555 of \citet{Breuer2012Occupation}, written in terms of the two roots $\vect U_\ell$ and $\vect V_\ell$, is
\begin{align*}
\mathcal W_\ell(z)=\big(e^{\vect V_\ell z}-e^{\vect U_\ell z}\big)\vect W_\ell,
\qquad z\geq0.
\end{align*}
Let $\vect R_\ell^{\rm pot}=-\vect W_\ell^{-1}\vect V_\ell\vect W_\ell$, so that
$e^{\vect R_\ell^{\rm pot}x}=\vect W_\ell^{-1}e^{-\vect V_\ell x}\vect W_\ell$.
The terminating-barrier potential-density identity in
\citet[Theorem~1, Equation~(13), together with Section~8]{Ivanovs2014}, after shifting the initial level from $y$ to $0$, gives
\begin{align*}
\vect G_\ell(y,x)=\mathcal W_\ell(y)e^{\vect R_\ell^{\rm pot}x}-\mathcal W_\ell(y-x),
\qquad x>0.
\end{align*}
For $0\leq y\leq x$, the second term vanishes, which gives the first branch of \eqref{eq:green-closed-form}. For $y\geq x$,
\begin{align*}
\vect G_\ell(y,x)
&=\big(e^{\vect V_\ell y}-e^{\vect U_\ell y}\big)e^{-\vect V_\ell x}\vect W_\ell
-\big(e^{\vect V_\ell(y-x)}-e^{\vect U_\ell(y-x)}\big)\vect W_\ell
\\
&=\big(e^{\vect U_\ell(y-x)}-e^{\vect U_\ell y}e^{-\vect V_\ell x}\big)\vect W_\ell,
\end{align*}
which is the second branch.
\end{proof}

\begin{proof}[Proof of Lemma~\ref{lem:occupation}]
By \eqref{eq:green-def} and Tonelli's theorem, $[\vect m_\ell(y)]_{ij}$ is the expected {active time} spent in phase $j$ before depletion when the process starts from active color $\ell$, level $y$, and phase $i$. Integrating the two branches of \eqref{eq:green-closed-form} over $x$ and using the invertibility of $\vect V_\ell$ gives
\begin{align}
\label{eq:occupation-integrated-green}
\vect m_\ell(y)
=\int_0^y e^{\vect U_\ell s}\vect W_\ell\,\dd s
+(\I_p-e^{\vect U_\ell y})\vect V_\ell^{-1}\vect W_\ell.
\end{align}
Put $\vect A_\ell=\tfrac12(\vect\Delta_\sigma^{(\ell)})^2$, and let $\vect\beta_\ell$ be the left null vector of $\vect U_\ell$ normalized by $\vect\beta_\ell\e=1$. Multiplying \eqref{eq:layer-qme} on the left by $\vect\xi^{(\ell)}$ shows that
$\vect\xi^{(\ell)}(\vect A_\ell\vect U_\ell+\vect\Delta_\mu^{(\ell)})$ is a left null vector of $\vect U_\ell$; hence
\begin{align*}
\vect\beta_\ell
=d_\ell^{-1}\vect\xi^{(\ell)}
(\vect A_\ell\vect U_\ell+\vect\Delta_\mu^{(\ell)}).
\end{align*}
Since $\vect V_\ell$ is invertible, its quadratic equation and
$\vect\xi^{(\ell)}\widetilde{\vect T}^{(\ell)}=\vect0$ give
$\vect\xi^{(\ell)}(\vect A_\ell\vect V_\ell+\vect\Delta_\mu^{(\ell)})=\vect0$.
Together with $\vect A_\ell(\vect V_\ell-\vect U_\ell)\vect W_\ell=\I_p$, this yields
\begin{align*}
\vect\beta_\ell\vect W_\ell
=-d_\ell^{-1}\vect\xi^{(\ell)}
=|d_\ell|^{-1}\vect\xi^{(\ell)}.
\end{align*}
Thus the spectral projection of $\vect U_\ell$ at its simple zero eigenvalue maps $\vect W_\ell$ to
$\vect P_\ell=|d_\ell|^{-1}\e\vect\xi^{(\ell)}$. Since all remaining eigenvalues of $\vect U_\ell$ lie in the open left half-plane, the matrix
\begin{align*}
\vect S_\ell
:=\int_0^\infty\big(e^{\vect U_\ell s}\vect W_\ell-\vect P_\ell\big)\,\dd s
\end{align*}
is well defined. Moreover, $e^{\vect U_\ell y}\vect P_\ell=\vect P_\ell$, and therefore
\begin{align*}
\int_0^y e^{\vect U_\ell s}\vect W_\ell\,\dd s
=y\vect P_\ell+(\I_p-e^{\vect U_\ell y})\vect S_\ell.
\end{align*}
Defining $\vect Q_\ell=\vect S_\ell+\vect V_\ell^{-1}\vect W_\ell$ and substituting into \eqref{eq:occupation-integrated-green} gives \eqref{eq:occupation-closed-form}.

It remains to verify the stated characterization of $\vect Q_\ell$. From its definition,
$\vect U_\ell\vect S_\ell=\vect P_\ell-\vect W_\ell$. Using the two quadratic equations and
$\vect A_\ell(\vect V_\ell-\vect U_\ell)\vect W_\ell=\I_p$, we obtain
\begin{align*}
\widetilde{\vect T}^{(\ell)}\vect Q_\ell
&=\widetilde{\vect T}^{(\ell)}\vect S_\ell
+\widetilde{\vect T}^{(\ell)}\vect V_\ell^{-1}\vect W_\ell
\\
&=\vect A_\ell\vect U_\ell\vect W_\ell
-\vect\Delta_\mu^{(\ell)}\vect P_\ell
+\vect\Delta_\mu^{(\ell)}\vect W_\ell
-\vect A_\ell\vect V_\ell\vect W_\ell
-\vect\Delta_\mu^{(\ell)}\vect W_\ell
\\
&=-\I_p-\vect\Delta_\mu^{(\ell)}\vect P_\ell.
\end{align*}
{The linear system is solvable because $\vect\xi^{(\ell)}(-\I_p-\vect\Delta_\mu^{(\ell)}\vect P_\ell)=\vect0$.} Any other solution differs by $\e\vect a$ for some row vector $\vect a$, and the expression in \eqref{eq:occupation-closed-form} is unchanged because
$(\I_p-e^{\vect U_\ell y})\e=\vect0$. The same formula also shows that every entry of $\vect m_\ell(y)$ grows at most linearly.
\end{proof}

\begin{proof}[Proof of Corollary~\ref{cor:launch-fc}]
Split the integral defining $\Lop{c}{\ell}(x)_{i\bullet}$ in \eqref{eq:launch-op-correct} at $y=x$, and substitute the two branches of \eqref{eq:green-closed-form} on each piece. On $(0,x]$,
\[\int_0^x\bm e_j^\top(e^{\vect V_\ell y}-e^{\vect U_\ell y})F_{ij}^{(c,\ell)}(\dd y)\,e^{-\vect V_\ell x}\vect W_\ell=\bm e_j^\top[g_{ij}^{(c,\ell)}(x;\vect V_\ell)-g_{ij}^{(c,\ell)}(x;\vect U_\ell)]e^{-\vect V_\ell x}\vect W_\ell.\]
On $(x,\infty)$, write $e^{\vect U_\ell(y-x)}=e^{\vect U_\ell y}e^{-\vect U_\ell x}$, so that
\[\int_x^\infty\bm e_j^\top e^{\vect U_\ell y}F_{ij}^{(c,\ell)}(\dd y)=\bm e_j^\top h_{ij}^{(c,\ell)}(x;\vect U_\ell).\]
This gives the second bracket, since $e^{-\vect U_\ell x}$ commutes with $h_{ij}^{(c,\ell)}(x;\vect U_\ell)$, both being functions of $\vect U_\ell$. Adding the two branches, multiplying by $T^{(c,\ell)}_{ij}$, and summing over $j$ gives \eqref{eq:launch-fc}.
\end{proof}

\begin{proof}[Detailed proof of Theorems~\ref{thm:main} and~\ref{thm:main-reg}]
{A finite invariant measure is constructed. In the hold-and-jump model the construction starts from an arbitrary positive multiple of the invariant row vector of the censored empty-state generator $\widetilde{\vect T}^{(0)}$, and in the regulated-base model from an arbitrary positive multiple of the base-face anchor $\vect\nu^{\mathrm R}(x_1)\,\dd x_1$. Censoring supplies the occupation kernel for a single depleting color, a launch maps a source-face measure into the occupation measure of the newly active color, and iterating over increasing color sequences produces every face measure. Finite first moments make the total mass finite, and a single global normalization then yields an invariant probability law.}

Fix a depleting color $\ell$. {By Lemma~\ref{lem:active-censoring}, the phase process observed while color $\ell$ is active has intensity matrix $\widetilde{\vect T}^{(\ell)}$, and its active-time occupation equals the real-time occupation restricted to $\{A(t)=\ell\}$. The Green kernel $\vect G_\ell(y,x)$ therefore gives the expected real-time occupation density on the face where color $\ell$ is active, started at height $y$.}

{
Fix colors $c<\ell$ and let $\vect\eta_c(\bm z,y)$ be a candidate invariant source-face density over phase, with lower coordinates $\bm z$ frozen and active height $y$ in color $c$. Since $\vect T^{(c,\ell)}$ is independent of the continuous coordinates, the outgoing launch flux from phase $i$ to phase $j$ is
$\eta_{c,i}(\bm z,y)T_{ij}^{(c,\ell)}\,\dd y$.

At such a launch, an independent height $Y\sim F_{ij}^{(c,\ell)}$ is sampled. Lemma~\ref{lem:active-censoring} and the Green kernel give the occupation generated on the target face as $\bm e_j^\top\vect G_\ell(Y,x_\ell)$. This occupation does not depend on the frozen height $y$, because both the launch law and the subsequent higher-color dynamics depend on the source state only through the triggering phase. Averaging over $Y$ and summing over $(i,j)$ therefore maps the source density to $\vect\eta_c(\bm z,y)\Lop{c}{\ell}(x_\ell)$. This is the unnormalized invariant occupation flow from the source face to the face on which color $\ell$ is active.

Equivalently, the single-launch balance identity is
}
\begin{align*}
\vect\eta_{c\to\ell}(\bm z,y,x_\ell)
=\vect\eta_c(\bm z,y)\,\Lop{c}{\ell}(x_\ell)
=\sum_{i,j\in E}\eta_{c,i}(\bm z,y)T^{(c,\ell)}_{ij}
\int_{(0,\infty)}\bm e_j^\top\vect G_\ell(w,x_\ell)F_{ij}^{(c,\ell)}(\dd w),
\end{align*}
{Here $\vect\eta_{c\to\ell}(\bm z,y,x_\ell)$ is the density of the state right after color $c$ freezes at $y$ and color $\ell$ becomes active at $x_\ell$, with the launch height $w$ already integrated out inside $\mathcal L$. The coordinate $y$ passes through unchanged on both sides, since neither the transition rate nor the launch law depends on it. The source density is multiplied by the launch-rate matrix, and the killed occupation kernel is integrated over the launch law. The same mechanism applies at every further launch along the chain.}

At each step along that chain, the launch height and the excursion that follows depend on the past only through the triggering phase, and the single-launch balance above already integrates the frozen height out of the kernel. Applying it repeatedly along $c_1<c_2<\cdots<c_n$ builds up the unnormalized stationary occupation density $\vect\rho\,\Lop{c_0}{c_1}(x_{c_1})\prod_{r=1}^{n-1}\Lop{c_r}{c_{r+1}}(x_{c_{r+1}})$, where the anchor $\vect\rho$ and the starting index $c_0\in\{0,1\}$ depend on the model, identified next.

In the hold-and-jump model, the bottom of the stack is the empty state, with stationary vector $\vect{p}^{\mathrm H}_0$ solving the balance equation \eqref{eq:main-boundary}. Taking $\vect\rho=\vect{p}^{\mathrm H}_0$ and $c_0=0$ gives \eqref{eq:main-density}.

In the regulated-base model, color $1$ is never removed. By the censoring argument above, its own intensity matrix $\widetilde{\vect T}^{(1)}$ already folds in every excursion above color $1$, so its density has the regulated-MMBM form $\vect{\nu}^{\mathrm{R}}(x_1)$ of \eqref{eq:regulated-base-density}, with direction fixed by the reflecting boundary condition \eqref{eq:regulated-base-boundary-g}. Since $X_1$ touches zero with probability zero in stationarity, no separate boundary atom is needed at $x_1=0$. Taking $\vect\rho=\vect{\nu}^{\mathrm{R}}(x_1)$ and $c_0=1$, and applying the single-launch and chaining arguments above to launches out of color $1$, gives \eqref{eq:main-density-reg}. The overall scale of $\vect{\nu}^{\mathrm{R}}$ is only fixed once the normalization below accounts for the excursion mass.

{
The single-launch identity is the stationary balance between a source face and the excursion faces it creates. Iterating it from an invariant anchor therefore gives an invariant measure on the full disjoint state space. It remains to prove that this measure is finite and fix its scale. Summing \eqref{eq:main-density} over all admissible color sequences, integrating over each face, and using $\int_0^\infty\Lop{c}{\ell}(x)\,\dd x=\vect M^{(c,\ell)}$ from \eqref{eq:A-def-correct}, the hold-and-jump mass equals
\begin{align*}
\vect p_0^{\mathrm H}\e
+\vect p_0^{\mathrm H}\vect m^{\mathrm H}
(\I^{\mathrm H}-\mathfrak M^{\mathrm H})^{-1}\e^{\mathrm H}.
\end{align*}
Every block of $\mathfrak M^{\mathrm H}$ is finite under Assumption~\ref{assump:first-moment}. Since launches only move to larger colors, $\mathfrak M^{\mathrm H}$ is strictly upper triangular and nilpotent, so its inverse is the finite sum $\sum_{k=0}^{C-1}(\mathfrak M^{\mathrm H})^k$. The total mass is therefore finite and positive, and normalization gives \eqref{eq:normalization} and the stationary probability law in Theorem~\ref{thm:main}. The regulated-base construction is identical after replacing the empty-state anchor by $\int_0^\infty\vect\nu^{\mathrm R}(x)\,\dd x$ and restricting excursion colors to $\ell\ge2$, which yields \eqref{eq:normalization-reg} and Theorem~\ref{thm:main-reg}.
}
\end{proof}

\bibliographystyle{apalike}
\bibliography{references_preemptive_workload_stacks}
\end{document}